\newcommand{\vs}{\vspace{0.4cm}}
\newcommand{\OCP}{(\mathbb{O} \otimes \mathbb{C})\mathbb{P}^{2}}
\newcommand{\OHP}{(\mathbb{O} \otimes \mathbb{H})\mathbb{P}^{2}}
\newcommand{\OOP}{(\mathbb{O} \otimes \mathbb{O})\mathbb{P}^{2}}
\newcommand{\Z}{\mathbb{Z}}
\newcommand{\X}{\mathfrak{X}}
\newtheorem{thm}{Theorem}[section]
\newtheorem{lem}[thm]{Lemma}
\newtheorem{prop}[thm]{Proposition}
\newtheorem{introthm}{Theorem}
\theoremstyle{definition}
\newtheorem{df}[thm]{Definition}
\newtheorem{ttable}[thm]{Table}
\theoremstyle{remark}
\newtheoremstyle{break}
{\topsep}{\topsep}%
{}{}
{\bfseries}{}%
{\newline}{}%
\theoremstyle{break}
\numberwithin{equation}{section}
\begin{document}

\title[Orientability of high-dim. manifolds with odd Euler char.]{Orientability of high-dimensional manifolds with odd Euler characteristic}

\author[R. S. Hoekzema]{Renee S. Hoekzema}
\address{Department of Mathematics, University of Oxford}

\email{hoekzema@maths.ox.ac.uk}
\subjclass[2010]{Primary 57R15, 57R20; Secondary 55S10, 55S05}



\date{}

\dedicatory{}

\maketitle

\begin{abstract}
	We call a manifold $k$-orientable if the $i^{th}$ Stiefel-Whitney class vanishes for all $i< 2^k$ ($k\geq 0$), generalising the notions of orientable (1-orientable) and spin (2-orientable). In \cite{Hoekzema2017} it was shown that $k$-orientable manifolds have even Euler characteristic (and in fact vanishing top Wu class), unless their dimension is $2^{k+1}m$ for some $m\geq 1$. This theorem is strict for $k=0,1,2,3$, but whether there exist 4-orientable manifolds with an odd Euler characteristic is an open question. This paper discusses the question of finding candidates for such a manifold $\X^{32m}$.
	
	As part of our investigation we study the example of the three exceptional symmetric spaces known as Rosenfeld planes, which have odd Euler characteristic and are of dimension 32, 64 and 128. We perform computations of the action of the Steenrod algebra on the mod 2 cohomology of the first two of these manifolds with the use computer calculations. The first Rosenfeld plane, $\OCP$, is 2-orientable but not 3-orientable and thus not an example of $\X^{32}$. We show that the second Rosenfeld plane $\OHP$ is 3-orientable and we present a condition under which is may be 4-orientable if the action of the Steenrod algebra is established further, and therefore it remains a potential candidate for $\X^{64}$.
	No other clear candidate manifolds for $\X^{32m}$ or in particular candidates for $\X^{32}$ are known to the author.
	
\end{abstract}

\tableofcontents

\section{Introduction}

It is a straightforward consequence of Poincar\'e duality in mod 2 homology that all manifolds of odd dimension have a vanishing Euler characteristic.
For orientable manifolds, the Euler characteristic is moreover even in dimensions that are not a multiple of 4.
Ochanine's theorem for the divisibility of the signature of spin manifolds implies that the Euler characteristic of spin manifolds is even unless the dimension is a multiple of 8 (\cite{Ochanine1981}). This sequence of statements was generalised in \cite{Hoekzema2017}.

\begin{df}[\cite{Hoekzema2017}]
	We call a manifold $k$-orientable if $w_i=0$ for $0<i<2^k$.
\end{df}

\begin{thm}[\cite{Hoekzema2017}]\label{mainthm}
	A $k$-orientable manifold or Poincar\'e complex $M$ ($k \geq 0$) has an even Euler characteristic $\chi (M)$ if its dimension is not a multiple of $2^{k+1}$. 
\end{thm}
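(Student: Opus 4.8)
The plan is to convert the parity statement about $\chi(M)$ into the vanishing of a single Wu class, and then to extract that vanishing from $k$-orientability using the Adem relations. First I would dispose of the odd-dimensional case: since $\chi(M) \equiv \sum_i \dim_{\mathbb{F}_2} H^i(M;\mathbb{F}_2) \pmod 2$ and mod $2$ Poincar\'e duality pairs off the Betti numbers $b_i \leftrightarrow b_{n-i}$, an odd $n = \dim M$ leaves no fixed middle term and forces $\chi(M)$ even. So I may assume $n = 2\ell$. Next I would reduce the parity of $\chi(M)$ to a top Wu number via the classical identity $\chi(M) \equiv \langle w_n, [M]\rangle \pmod 2$. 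Feeding in Wu's formula $w = Sq(v)$ together with the instability relation $Sq^a v_b = 0$ for $a > b$ and the vanishing $v_b = 0$ for $b > \ell$, the only term of $w = Sq(v)$ landing in degree $n$ is $w_n = Sq^{\ell} v_{\ell} = v_{\ell}^2$. Hence $\chi(M) \equiv \langle v_{\ell}^2, [M]\rangle \pmod 2$, and it suffices to prove that the middle-dimensional Wu class $v_{\ell}$ vanishes whenever $2^{k+1}\nmid n$, i.e. whenever $2^k \nmid \ell$ (this is the source of the sharper ``vanishing top Wu class'' claimed in the abstract). Everything here uses only mod $2$ Poincar\'e duality and Steenrod operations, so it applies verbatim to Poincar\'e complexes.

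I would then translate the hypothesis. Inverting $w = Sq(v)$ gives $v = \chi(Sq)(w)$, so in each degree $v_n = \sum_a \chi(Sq^a)(w_{n-a})$; since $Sq^0 = \mathrm{id}$, a short downward induction shows that $w_i = 0$ for $0 < i < 2^k$ forces $v_i = 0$ for $0 < i < 2^k$. The main step is to promote this to the statement that the Wu classes of a $k$-orientable manifold are supported only in degrees divisible by $2^k$; the class $v_{\ell}$ with $2^k \nmid \ell$ is then one of the forced-to-vanish classes, and we are done. I would attempt an induction on degree, built on the recursion $v_n = w_n + \sum_{a \ge 1} Sq^a v_{n-a}$ and on the Adem relations, which rewrite each $Sq^a$ with $0 < a < 2^k$ in terms of the generators $Sq^{2^j}$ for $j < k$.

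The genuine difficulty, and where I expect the real work to lie, is that the desired vanishing is not term-by-term. Although $w_1,\dots,w_{2^k-1}$ vanish on $M$, the ideal they generate in $H^*(BO;\mathbb{F}_2) = \mathbb{F}_2[w_1,w_2,\dots]$ is not closed under the Steenrod action: for example $Sq^2 w_{2^k-1}$ carries a nonzero $w_{2^k+1}$ component, so the higher classes $w_n$ with $n \ge 2^k$ genuinely survive and must cancel against the terms $Sq^a v_{n-a}$. Controlling these cancellations is the crux. I expect the cleanest route is to work universally in $H^*(BO;\mathbb{F}_2)$, write each Wu polynomial as $v_n = \sum_a \chi(Sq^a)(w_{n-a})$ with the $\chi(Sq^a)$ expanded by Wu's formula, and establish the congruence $v_n \in (w_1,\dots,w_{2^k-1})$ for $2^k \nmid n$ by a mod $2$ analysis of the relevant binomial coefficients via Lucas' theorem. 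Restricting the resulting universal identity to $M$, where the generators of that ideal vanish, kills $v_{\ell}$ and hence makes $\chi(M)$ even.
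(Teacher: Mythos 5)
This theorem is imported from \cite{Hoekzema2017} and not reproved in the present paper, so the comparison is with the argument of that reference, whose machinery reappears here as Lemma \ref{orientableWu} and the ``Steenrod squares into the top dimension'' method of Section \ref{section:methods}. Your reductions are correct and agree with that argument: the odd-dimensional case via mod $2$ Poincar\'e duality, the congruence $\chi(M)\equiv\langle v_\ell^2,[M]\rangle \pmod 2$ for $n=2\ell$ (for Poincar\'e complexes this is cleanest via the observation that $v_\ell$ is the characteristic element of the nondegenerate cup pairing on $H^\ell(M;\mathbb{Z}/2)$, rather than via $w_n$ of a tangent bundle, which a Poincar\'e complex does not have), and the implication that $w_i=0$ for $0<i<2^k$ forces $v_i=0$ in the same range. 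So, as you say, everything hinges on showing $v_\ell=0$ whenever $2^k\nmid\ell$.

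That is exactly where your proposal stops being a proof. The route you sketch --- establish the universal ideal membership $v_n\in(w_1,\dots,w_{2^k-1})$ in $H^*(BO;\mathbb{F}_2)$ by expanding $\chi(Sq^a)(w_{n-a})$ and controlling binomial coefficients with Lucas' theorem --- is left entirely unexecuted, and it is a strictly harder statement than what is needed: as you note yourself, the ideal $(w_1,\dots,w_{2^k-1})$ is not closed under the Steenrod action, so the required cancellations are global, and you supply no mechanism for producing them. The missing idea is to stay on $M$ and use the duality definition of $v_\ell$ directly: $\langle v_\ell x,[M]\rangle=\langle Sq^\ell x,[M]\rangle$ for all $x\in H^{n-\ell}(M;\mathbb{Z}/2)$, so it suffices to show that $Sq^\ell\colon H^{n-\ell}\to H^n$ is the zero map. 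The key lemma of \cite{Hoekzema2017} is then purely about the Steenrod algebra: if $2^j$ is the lowest power of $2$ in the dyadic expansion of $m$, the Adem relation $Sq^{2^j}Sq^{m-2^j}=\binom{m-2^j-1}{2^j}Sq^m+\sum_{c>0}(\cdots)\,Sq^{m-c}Sq^c$ has leading coefficient $1$ mod $2$, and an induction on $j$ gives $Sq^m\in\sum_{i\le j}Sq^{2^i}\mathcal{A}$. When $2^k\nmid \ell$ one has $j\le k-1$, so every composite in the decomposition of $Sq^\ell$ has leftmost factor (the operation applied last, landing in $H^n$) equal to some $Sq^{2^i}$ with $i\le k-1$, which into the top degree is cup product with $v_{2^i}=0$ by Lemma \ref{orientableWu}. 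Hence $Sq^\ell$ into the top degree vanishes, $v_\ell=0$ by nondegeneracy of the pairing, and $\chi(M)$ is even. This sidesteps any universal identity in $H^*(BO;\mathbb{F}_2)$; to salvage your plan you would have to actually prove the ideal membership, and I do not see how binomial-coefficient bookkeeping alone would organize the cancellations you correctly identify as the crux.
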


This paper discusses the question whether this theorem is strict: does a $k$-orientable manifold with odd Euler characteristic ($\chi$) exist in all dimensions admitted by Theorem \ref{mainthm}, for all values of $k$? For $k=0,1,2,3$, the answer is yes, and the exemplifying manifolds are summarised in the following table.

\vs

\begin{tabular}{|l|l|c|c|}
	\hline
	$k$ & corresponds to & \bf{dimensions with} 	& \bf{$k$-orientable}\\
	& & \bf{odd $\chi$ possible} & \bfseries{manifolds with} \\
	& & & \bf{odd $\chi$}\\
	\hline
	0 & any manifold & $2m$ & $\mathbb{RP}^{2m}$\\
	1 & orientable manifolds & $4m$ & $\mathbb{CP}^{2m}$\\
	2 & spinable manifolds & $8m$ & $\mathbb{HP}^{2m}$\\
	3 & implied by stringable & $16m$ & $(\mathbb{OP}^2)^m$\\
	4 & implied by fivebraneable & $32m$ & $\X^{32m}$? \\	\hline
\end{tabular}

\vs

Whether a 4-orientable manifold with odd Euler characteristic exists, here referred to as $\X^{32m}$, is currently an open question.
By theorem \ref{mainthm}, such a manifolds would need be of a dimension divisible by 32. 
One might look for examples of highly orientable manifolds with odd Euler characteristic by considering special classes of manifolds that generalise the sequence of projective planes over division algebras.

\subsection*{Rational projective planes}
In \cite{Kennard2017} the existence of rational projective planes in different dimensions is considered. Rational projective planes are simply connected, smooth, closed manifolds with rational cohomology isomorphic to $\mathbb{Q}[a]/\langle a^3 \rangle$, and thus in particular have odd Euler characteristic. Kennard and Su show that these manifolds exist in dimensions 4, 8, 16, 32, 128 and 256, but there are none in any dimension between 256 and $2^{13}$, with the possible exception of 544, 1024, 2048, 4160 and 4352. Whether there exist infinitely many rational projective planes remains an open question. However, the authors did show that no spinable rational projective planes exist in dimensions other than 8 and 16 (\cite{Kennard2017}, Theorem C). Hence all rational projective planes besides those in the same dimensions as $\mathbb{HP}^2$ and $\mathbb{OP}^2$ are 1-orientable but not 2-orientable, and hence this class of manifolds does not give rise to any candidates for $\X^{32m}$.

\subsection*{Rosenfeld planes}
A different direction in which to generalise the notion of projective planes is given by considering symmetric spaces. All projective spaces over different division algebras are in particular symmetric spaces of different Lie groups. Irreducible compact symmetric spaces were classified by Cartan and there are seven infinite families and twelve exceptional spaces in this classification. Recently, a uniform construction of all compact symmetric spaces was found, describing them as different types of Grassmannians using the four division algebras and the Freudenthal magic square (\cite{Huang2011}). Three particularly interesting symmetric spaces in this context are given by the Rosenfeld planes of dimensions 32, 64 and 128. We here denote them as $\OCP$, $\OHP$ and $\OOP$, but in Cartan's classification scheme they are denoted EIII, EVI and EVIII respectively.
They are also known as the bioctonionic projective plane, the quateroctonionic projective plane and the octooctonionic projective plane.
These manifolds have an odd Euler characteristic. They can be described as Lie group quotients of the exceptional groups $E_6$, $E_7$ and $E_8$ in the following way (\cite{Baez2002}): 
\begin{eqnarray}
\OCP &=& E_6/\left((Spin(10)\times U(1))/\mathbb{Z}/4\right)\nonumber\\
\OHP &=& E_7/\left( (Spin(12)\times Sp(1))/\mathbb{Z}/2\right)\nonumber\\
\OOP &=& E_8/\left( Spin(16)/\mathbb{Z}/2\right)\nonumber
\end{eqnarray}

The cohomology of EIII or $(\mathbb{O} \otimes \mathbb{C})\mathbb{P}^{2}$ is known integrally, for EVI or $(\mathbb{O} \otimes \mathbb{H})\mathbb{P}^{2}$ the cohomology is known modulo 2 and for EVIII or $(\mathbb{O} \otimes \mathbb{O})\mathbb{P}^{2}$, as well as EV and EIX in Cartan's classification scheme, the cohomology remains undetermined.


\subsection*{Results}

In this paper we consider the orientability of $\OCP$ and $\OHP$ by considering the action of the Steenrod algebra on their mod 2 cohomology. 
We compute the action of specific Steenrod squares with target the top cohomology for both these manifolds. The computations make use of a program written in Wolfram Mathematica by the author that aids in the expansion of the Cartan formula on large products.
In section \ref{OCP} we show

\begin{introthm}
	$(\mathbb{O} \otimes \mathbb{C})\mathbb{P}^{2}$ is 2-orientable and not 3-orientable.
\end{introthm}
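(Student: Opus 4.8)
The plan is to read off the low-degree Stiefel--Whitney classes of $\OCP$ from the action of the Steenrod algebra on its mod $2$ cohomology, via Wu's formula. First I would record the known integral cohomology of EIII: it is torsion-free and concentrated in even degrees, and as a ring it is (the reduction of) a complete intersection $\Z[x_2,x_8]/(r_{18},r_{24})$ with $|x_2|=2$, $|x_8|=8$ and relations in degrees $18$ and $24$, whose Poincar\'e polynomial $(1+t^2+\cdots+t^{16})(1+t^8+t^{16})$ has coefficients summing to $\chi(\OCP)=27$. Passing to $\mathbb{Z}/2$ coefficients we write $H^*(\OCP;\mathbb{Z}/2)=\mathbb{Z}/2[x_2,x_8]/(r_{18},r_{24})$. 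Because the cohomology vanishes in every odd degree, $w_1=w_3=w_5=w_7=0$ for free; hence $2$-orientability is equivalent to $w_2=0$, and failure of $3$-orientability is equivalent to one of $w_4,w_6$ being non-zero.

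Next I would invoke Wu's formula $w=Sq(v)$, where the Wu classes $v_i\in H^i$ are characterised by $Sq^i(y)=v_i\cup y$ for all $y\in H^{32-i}$ and vanish for $i>16$. Since all odd Wu classes lie in odd-degree cohomology and therefore vanish, this yields $w_2=v_2$ and, once $v_2=0$ is known, $w_4=v_4$. Thus it suffices to decide whether $Sq^2\colon H^{30}\to H^{32}$ and $Sq^4\colon H^{28}\to H^{32}$ are zero. By the Cartan formula the entire Steenrod action is determined by its values on the generators: $Sq(x_2)=x_2+x_2^2$ is immediate, as $x_2$ reduces an integral class so that $Sq^1x_2=0$, while for $x_8$ one has $Sq^1x_8=0$ and $Sq^8x_8=x_8^2$.

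The main obstacle is to pin down the intermediate squares $Sq^2x_8\in H^{10}$, $Sq^4x_8\in H^{12}$ and $Sq^6x_8\in H^{14}$, which are not forced by dimension (unlike in $\mathbb{OP}^2$, where the corresponding groups vanish). I would determine them by exploiting the homogeneous structure: the tangent bundle of the Hermitian symmetric space EIII is associated to the isotropy representation, a twist of the half-spin representation of $Spin(10)$, so that its total Chern class, and hence the total Steenrod/Wu data, is pulled back from $H^*(BSpin(10)\times BU(1);\mathbb{Z}/2)$ and expressible through $x_2,x_8$; alternatively one constrains the remaining coefficients using the Adem relations together with instability and integrality. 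Whatever value $Sq(x_8)$ takes then propagates to $H^{26},H^{28},H^{30}$ through the Cartan formula, and this is precisely where the author's computer expansion of $Sq$ on large monomials, followed by reduction modulo the ideal $(r_{18},r_{24})$, carries out the bookkeeping.

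Finally I would assemble the answer. Evaluating $Sq^2$ on the one-dimensional group $H^{30}$ should give $v_2=0$, whence $w_2=0$ and $\OCP$ is $2$-orientable; evaluating $Sq^4$ on $H^{28}$ should give $v_4=w_4\neq 0$ (computing $v_6$ and $w_6$ in the same fashion as a cross-check), so $\OCP$ is not $3$-orientable. I expect the genuinely hard part to be the correct determination of $Sq(x_8)$ and the careful reduction of the resulting large polynomials modulo the relations; once those are in hand, the two Wu-class evaluations, and hence the theorem, follow mechanically.
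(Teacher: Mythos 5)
Your overall strategy coincides with the paper's: reduce orientability to the vanishing of Wu classes, note that all odd-degree classes (and hence $w_1,w_3,w_5,w_7$ and the odd Wu classes) vanish because the cohomology is concentrated in even degrees, and then decide the theorem by evaluating $Sq^2\colon H^{30}\to H^{32}$ and $Sq^4\colon H^{28}\to H^{32}$ via the Cartan formula applied to monomials in the two ring generators, reduced modulo the relations. That skeleton is correct and is exactly Proposition \ref{SqOCP2}.

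The genuine gap is that you never actually determine $Sq^2 x_8$ and $Sq^4 x_8$, and these two values are the entire content of the theorem: writing $Sq^2 x_8=\alpha x_2^5+\beta x_2 x_8$ and $Sq^4 x_8=\gamma x_2^6+\delta x_2^2 x_8$, the computation on $H^{30}$ and $H^{28}$ yields $1+\beta$ and $1+\delta$ respectively, so the answer to both halves of the statement is decided precisely by $\beta$ and $\delta$. You correctly flag this as ``the genuinely hard part,'' but the routes you sketch do not close it. The Adem relations together with instability only constrain the coefficients without fixing them: for instance $Sq^2Sq^2=Sq^3Sq^1$ applied to $x_8$ forces $\alpha(1+\beta)=0$, which is consistent with both $\beta=0$ (together with $\alpha=0$) and $\beta=1$; nothing in that calculus distinguishes the $2$-orientable outcome from its negation. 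The representation-theoretic route through the isotropy representation of $E_6/((Spin(10)\times U(1))/\mathbb{Z}/4)$ could in principle work, but it is a substantial computation that you do not carry out. The paper resolves this step by importing the determination of the Steenrod action on $H^*(\OCP;\Z/2)$ from Ishitoya's work, which gives $\alpha=\beta=\gamma=1$ and $\delta=0$; without that input (or an actual computation replacing it), your final paragraph merely asserts the expected signs of the two evaluations rather than proving them.
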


We therefore establish that $\OCP$ is not a candidate for $\X^{32}$. Moving on to the next Rosenfeld plane, we show in section \ref{OHP}

\begin{introthm}
	$\OHP$ is 3-orientable. 
\end{introthm}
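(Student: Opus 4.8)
The plan is to show that $w_1 = w_2 = w_3 = w_4 = w_5 = w_6 = w_7 = 0$ for $\OHP$, which is precisely the condition $w_i = 0$ for $0 < i < 2^3 = 8$ required for 3-orientability. Since $\OHP$ is a simply connected symmetric space, it is automatically orientable, giving $w_1 = 0$. The main tool will be the known structure of the mod $2$ cohomology of $\OHP$ together with the action of the Steenrod squares, exactly as announced in the Results subsection; my task is to reconstruct which Stiefel--Whitney classes can possibly be nonzero and then rule out the low-degree ones.

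\medskip

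First I would recall the mod $2$ cohomology ring $H^*(\OHP; \mathbb{Z}/2)$, which is known (as stated in the excerpt) and which for a Rosenfeld plane should be fairly sparse in low degrees. The key observation is that $\OHP$ has dimension $64$ and, being a ``projective plane''-like object, its cohomology is concentrated in a small number of degrees that are multiples of some fundamental degree. I would identify the lowest-degree nonzero cohomology class above degree $0$; call its degree $d$. If $d \geq 8$, then $H^i(\OHP;\mathbb{Z}/2) = 0$ for all $0 < i < 8$, and since each $w_i$ lives in $H^i$, every $w_i$ with $0 < i < 8$ automatically vanishes, establishing 3-orientability immediately. If instead there are nonzero classes in degrees below $8$ (for instance the generator sits in some degree $d < 8$), then I would need to compute the relevant $w_i$ directly.

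\medskip

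In the latter case, the strategy is to use Wu's formula, which expresses the Stiefel--Whitney classes in terms of the Wu classes $v_i$, where $v_i$ is characterised by $\langle v_i \cup x, [M]\rangle = \langle Sq^i x, [M]\rangle$ for all $x \in H^{n-i}$. The total Stiefel--Whitney class is $w = Sq(v)$, so computing the low Wu classes via the action of the Steenrod squares on cohomology determines the low $w_i$. This is exactly where the announced computations of Steenrod squares come in: by knowing how $Sq^i$ acts (in particular, by the instability relation $Sq^i x = 0$ when $i$ exceeds the degree of $x$, and by the computed action on the generators), I would show that $v_i = 0$ for the relevant range, hence that $w_i = 0$ for $0 < i < 8$. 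Comparing with Theorem~A, which asserts $\OCP$ is $2$-orientable but \emph{not} $3$-orientable (so that $w_4 \neq 0$ there via a nontrivial $Sq^4$), gives a useful sanity check and contrast: for $\OHP$ the analogous obstruction in degrees up to $7$ must vanish.

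\medskip

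The main obstacle I anticipate is the bookkeeping of the Steenrod algebra action on the mod $2$ cohomology ring, which is why the author resorts to computer-assisted expansion of the Cartan formula on large products. Concretely, expressing $w$ via $Sq(v)$ requires computing the total square of each Wu class and keeping track of the ring structure through the Cartan formula, and verifying that all contributions to $w_i$ for $0 < i < 8$ cancel or vanish. The conceptual content is light once the cohomology and Steenrod action are in hand; the difficulty is entirely computational, ensuring no nonzero $Sq^i$ sneaks a nontrivial class into one of the low-degree Stiefel--Whitney classes. Establishing that the generator's degree and the Steenrod action together force $v_1 = \cdots = v_7 = 0$ is the crux of the argument.
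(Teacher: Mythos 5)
Your fallback branch is exactly the paper's argument; the issue is that your preferred first branch is unavailable. The mod $2$ cohomology of $\OHP$ has ring generators $y_2$ and $y_3$ in degrees $2$ and $3$, so $H^i(\OHP;\Z/2)\neq 0$ for all $2\le i\le 7$ and nothing vanishes for degree reasons. You are therefore forced into the Wu-class computation, and there you can sharpen the plan: by the lemma quoted from \cite{Hoekzema2017} (Lemma \ref{orientableWu}), $3$-orientability is equivalent to $v_i=0$ for $0<i\le 4$ only, so you need not establish $v_5=v_6=v_7=0$ directly (equivalently, $w_5, w_6, w_7$ vanish automatically once $w_1,\dots,w_4$ do). Since $v_i$ is detected by $Sq^i\colon H^{64-i}\to H^{64}$, since $H^{63}=0$ disposes of $v_1$, and since $Sq^3=Sq^1 Sq^2$ factors through $H^{63}$, the entire theorem reduces to two evaluations: $Sq^2(y_2 y_{20}^3)$ and $Sq^4(y_{20}^3)$, where $y_2 y_{20}^3$ and $y_{20}^3$ are the single additive generators of $H^{62}$ and $H^{60}$. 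The paper carries these out by expanding the Cartan formula, substituting Nakagawa's partially undetermined values of $Sq^1$, $Sq^2$, $Sq^4$ on the ring generators, and reducing modulo the relations in degree $64$; crucially, every monomial carrying one of the undetermined coefficients $\mu,\nu,\mu',\nu'$ vanishes in $H^{64}$, which is why the conclusion is unconditional (in contrast with the $4$-orientability statement, where $\beta''$ and $\nu''$ survive). Your proposal correctly identifies the method but stops short of these two concrete computations, which are the entire content of the proof.
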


However, the question whether $\OHP$ presents a candidate for $\X^{64}$ cannot be resolved within our present-day understanding of the cohomology of the manifold.

\begin{introthm}
	$\OHP$ is 4-orientable if $\beta''+ \nu''=1$, where $\beta''$ is the prefactor of $y_2 y_3^2 y_{12}$ in $Sq^8 y_{12}$ and $\nu''$ is the prefactor of $y_2 y_3^2 y_{20}$ in $Sq^8 y_{20}$. 
\end{introthm}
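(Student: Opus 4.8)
The plan is to translate $4$-orientability into the vanishing of Wu classes and then reduce everything to a single class, $v_8$. Recall that the total Wu class $v=\sum_i v_i$ of $M=\OHP$ is characterised by $\langle v_i\cup x,[M]\rangle=\langle Sq^i x,[M]\rangle$ for all $x\in H^{64-i}(M;\mathbb{Z}/2)$, and that the total Stiefel--Whitney class satisfies $w=Sq(v)$. Expanding $w_i=\sum_j Sq^j v_{i-j}$ and inducting on $i$ shows that $w_i=0$ for $0<i<N$ if and only if $v_i=0$ for $0<i<N$. Since Theorem B gives $3$-orientability, i.e.\ $v_i=0$ for $0<i<8$, proving $4$-orientability is equivalent to showing $v_8=v_9=\dots=v_{15}=0$. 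By Poincar\'e duality each statement $v_i=0$ is equivalent to the assertion that $Sq^i\colon H^{64-i}(M;\mathbb{Z}/2)\to H^{64}(M;\mathbb{Z}/2)$ is the zero map.

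First I would dispose of the classes $v_9,\dots,v_{15}$ assuming $v_8=0$. By Serre's theorem the squares $Sq^{2^j}$ generate the mod $2$ Steenrod algebra, so for each $9\le i\le 15$ one may write $Sq^i$ as a sum of monomials in $Sq^1,Sq^2,Sq^4,Sq^8$ (only these occur, since every factor $Sq^{2^j}$ in a monomial of total degree $i<16$ has $2^j\le 8$). Applying such a monomial to $x\in H^{64-i}(M;\mathbb{Z}/2)$, the outermost factor is some $Sq^{2^j}$ with $2^j\in\{1,2,4,8\}$ acting $H^{64-2^j}(M;\mathbb{Z}/2)\to H^{64}(M;\mathbb{Z}/2)$; this vanishes because $v_1=v_2=v_4=0$ by $3$-orientability and $v_8=0$ by hypothesis. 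Hence every monomial, and therefore $Sq^i$, vanishes on $H^{64-i}(M;\mathbb{Z}/2)$, giving $v_i=0$. This reduces the whole theorem to establishing that $v_8=0$ precisely when $\beta''+\nu''=1$.

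The remaining, and decisive, step is the direct computation of $Sq^8\colon H^{56}(M;\mathbb{Z}/2)\to H^{64}(M;\mathbb{Z}/2)$. Using the known mod $2$ cohomology ring, with its generators $y_2,y_3,y_{12},y_{20},\dots$ and its fundamental class expressed as a fixed monomial, I would fix a basis of $H^{56}(M;\mathbb{Z}/2)$ and evaluate $Sq^8$ on each basis element by iterating the Cartan formula, reducing the resulting products in the ring and reading off the coefficient of the top class. This is exactly the expansion that the author's Mathematica program is designed to automate. The values $Sq^j y_d$ needed as input are forced by instability and the Adem relations for all generators except the degree-$12$ and degree-$20$ ones, where the components $\beta''$ of $y_2y_3^2y_{12}$ in $Sq^8 y_{12}$ and $\nu''$ of $y_2y_3^2y_{20}$ in $Sq^8 y_{20}$ are not determined. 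I expect these two undetermined coefficients to enter the top-degree evaluation only through the combination $\beta''+\nu''$, with all other contributions summing to a fixed element of $H^{64}(M;\mathbb{Z}/2)$; matching coefficients of the fundamental class then yields $v_8=0\iff\beta''+\nu''=1$.

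The main obstacle is precisely this last point: because the action of the Steenrod algebra on $\OHP$ is not fully resolved, the coefficients $\beta''$ and $\nu''$ cannot be pinned down from Adem relations or from the ring structure alone, which is why the conclusion must remain conditional. A secondary difficulty is purely computational---the Cartan expansion of $Sq^8$ on degree-$56$ monomials produces very large sums that must be reduced modulo the relations of the cohomology ring---but this is handled mechanically rather than by hand, and the conceptual content lies entirely in showing that the obstruction collapses to the single scalar $\beta''+\nu''$.
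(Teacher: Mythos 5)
Your proposal is correct and follows essentially the same route as the paper: reduce $4$-orientability (via Wu classes and the decomposability of $Sq^i$ for $i$ not a power of $2$) to the single statement that $Sq^8\colon H^{56}(\OHP;\Z/2)\to H^{64}(\OHP;\Z/2)$ vanishes, and then evaluate $Sq^8$ on the two generators of $H^{56}$ by a machine-assisted Cartan expansion, which in the paper yields $(1+\beta''+\nu'')\,y_2y_{20}^3$ and $0$, so that the only surviving indeterminacy is the combination $\beta''+\nu''$. The one descriptive slip is your claim that the remaining input squares are ``forced by instability and the Adem relations'': the values of $Sq^1,Sq^2,Sq^4,Sq^8$ on the generators come from Nakagawa's computation (with twelve undetermined coefficients, not just two), and the point of the paper's calculation is that all the others happen to multiply monomials that vanish in degree $64$.
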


The prefactors $\beta''$ and $\nu''$ are part of the indeterminacy in our current knowledge of the action of the Steenrod algebra on the mod 2 cohomology of $\OHP$ as established in \cite{Nakagawa2001}.

The following table now summarises our updated knowledge of the orientability of the Rosenfeld planes and their candidacy for finding an $\X^{32m}$.

\vs

\begin{tabular}{|l|l|l|l|}
	\hline
	Rosenfeld  & Cartan's & $k$-orientable  & Status 	\\
	plane & classification & for $k$: &  \\
		\hline
	$\OCP $& EIII & 2 not 3 & Not a candidate for $\X^{32}$\\
	$\OHP $& EVI & 3 maybe 4 & Potential candidate for $\X^{64}$\\
	$\OOP $& EVIII & unknown  & Potential candidate for $\X^{128}$, \\
	& & & currently cohomology unknown\\
\hline
\end{tabular}
\vs

No other clear sources of candidate manifolds for $\X^{32m}$ are known to the author. In particular, one can wonder whether if 
an $\X^{32}$ exists, a 4-orientable a manifold with odd Euler characteristic in the lowest possible dimension.


\subsection*{Outline}
In section \ref{section:methods} we give an overview of the computational methodology for establishing $k$-orientability of a high-dimensional manifold when one is presented with its mod 2 cohomology and the (partial) action of the Steenrod algebra on this cohomology. Section \ref{OCP} discusses application of this methodology to $\OCP$. As intermediate computational results we list an additive basis for the integral cohomology of $\OCP$ and the corollary that its signature is 3. Section \ref{OHP} discusses computations and results for $\OHP$. The 64 additive groups and generators in terms of the ring generators are listed in subsection \ref{additiveOHP}, and a full list of relations can be found online\footnote{Supplementary materials can be found online at \href{http://reneehoekzema.nl/Mathematics.html}{reneehoekzema.nl/Mathematics}.}. 
	Moreover we give the explicit calculations of the action of $Sq^3$, $Sq^5$, $Sq^6$ and $Sq^7$ on the ring generators. We list a number of explicit computational results in this paper in the hope that they will make the methodology for determining $k$-orientability as well as the study of the first two Rosenfeld planes accessible to interested researchers and students with little computational background.

\subsection*{Acknowledgements} It is a pleasure to acknowledge contributions to this projects arising from discussions and email conversations with Andrew Ranicki, Masaki Nakagawa, Robert Bruner, Andr\'e Henriques, Christopher Douglas and Ulrike Tillmann. The author is particularly indebted to Robert Bruner for helping with the implementation of Magma Computational Algebra System to compute an additive basis for the cohomology of $\OHP$, which was indispensable for the calculations presented in section \ref{OHP}.

This research was supported by the EPSRC, the Hendrik Mullerfonds, the Fundatie van de Vrijvrouwe van Renswoude, the Oxford Mathematical Institute and Trinity College Oxford. Furthermore, the author would like to thank the Max Planck Institute for Mathematics in Bonn for its hospitality and financial support during the editing process.

\section{Determining $k$-orientability from the action of the Steenrod algebra on mod 2 cohomology}\label{section:methods}

\subsection{Link to Wu classes}

In \cite{Hoekzema2017} it was shown that $k$-orientability can be established by considering the values of the Wu classes rather than the Stiefel-Whitney classes. Given a manifold $M$ of dimension $n$ and $i\leq n/2$, the Wu class $v_i \in H^i (M; \Z/2) $ is the class such that squaring to the top dimension corresponds to cupping it with the corresponding Wu class:
\begin{equation}
v_i \smile x_{n-i} = Sq^i (x_{n-i}) \in H^{n} (M; \Z/2),
\end{equation}

for any class $x_{n-i} \in H^{n-i} (M; \Z/2)$.

It was shown in \cite{Hoekzema2017} that rather than requiring the lowest Stiefel-Whitney classes to vanish, one can equivalently define $k$-orientability by requiring that Wu classes in the same degrees vanish.

\begin{lem}[\cite{Hoekzema2017}]\label{orientableWu}
A manifold is $k$-orientable if and only if $v_i=0$ for $0<i \leq 2^{k-1}$.
\end{lem}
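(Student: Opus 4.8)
The plan is to run the whole argument through the two equivalent forms of Wu's theorem. Writing $Sq=\sum_{i\geq 0}Sq^i$ for the total square and $\chi$ for the canonical antiautomorphism of the Steenrod algebra, one has $w=Sq(v)$ and, since $\chi(Sq)\,Sq=1$, the inverse relation $v=\chi(Sq)(w)$. I would also lean on the characterisation coming from Poincar\'e duality: because the cup pairing $H^i(M;\Z/2)\otimes H^{n-i}(M;\Z/2)\to H^n(M;\Z/2)\cong\Z/2$ is nondegenerate, $v_i$ is the \emph{unique} class representing the functional $x\mapsto\langle Sq^i x,[M]\rangle$, so that $v_i=0$ if and only if $\langle Sq^i x,[M]\rangle=0$ for every $x\in H^{n-i}(M;\Z/2)$. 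I would prove the two implications separately, with essentially all the content sitting in one of them.

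For the implication that $k$-orientability forces the low Wu classes to vanish, assume $w_i=0$ for $0<i<2^k$ and expand $v_m=\sum_i\chi(Sq^i)(w_{m-i})$. For $0<m\leq 2^{k-1}$ every summand with $m-i>0$ involves a class $w_{m-i}$ with $0<m-i\leq 2^{k-1}<2^k$, hence zero, while the remaining term $\chi(Sq^m)(1)$ vanishes because a positive-degree operation kills the unit. Thus $v_m=0$ for $0<m\leq 2^{k-1}$, which is the easy direction.

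The substantive direction is the converse, where a factor-of-two gap must be manufactured: assuming only $v_i=0$ for $0<i\leq 2^{k-1}$, I must reach $w_i=0$ in the wider range $0<i<2^k$. The plan is first to upgrade the hypothesis to $v_i=0$ for \emph{all} $0<i<2^k$ by a doubling argument, and then to read off the Stiefel--Whitney classes. The doubling is where I expect the real work to lie (illustrated already by the classical relation $v_3=v_1v_2+v_1^3$, which forces $v_3=0$ once $v_1=0$). Its engine is Serre's theorem that $Sq^m$ is decomposable in the Steenrod algebra, i.e.\ a sum of products $Sq^{b}Sq^{c}$ with $0<b,c<m$, precisely when $m$ is not a power of $2$, together with the arithmetic fact that the open interval $(2^{k-1},2^k)$ contains no power of $2$. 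Proceeding by induction on $m$ in this interval, I would write, for $x\in H^{n-m}(M;\Z/2)$,
\[
\langle Sq^m x,[M]\rangle=\sum_j\langle Sq^{b_j}(Sq^{c_j}x),[M]\rangle=\sum_j\langle v_{b_j}\smile Sq^{c_j}x,[M]\rangle,
\]
applying the Wu relation to the test class $Sq^{c_j}x\in H^{n-b_j}(M;\Z/2)$. Each index satisfies $0<b_j<m<2^k$, so $v_{b_j}$ vanishes either by hypothesis (if $b_j\leq 2^{k-1}$) or by the inductive hypothesis (if $2^{k-1}<b_j<m$); hence the pairing is zero for all $x$ and $v_m=0$.

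Finally, with $v_i=0$ established for all $0<i<2^k$, I would expand $w=Sq(v)$: in degree $m$, $w_m=\sum_l Sq^{m-l}(v_l)$, where for degree reasons only $l\geq m/2$ contribute and the $l=0$ term is $Sq^m(1)=0$. For $0<m<2^k$ every surviving index obeys $0<l\leq m<2^k$, so $v_l=0$ and therefore $w_m=0$. The one genuinely delicate point is the doubling step; everything else is bookkeeping with degrees and the antipode. It is exactly the absence of a power of $2$ strictly between $2^{k-1}$ and $2^k$ that converts the Wu-class condition up to $2^{k-1}$ into the Stiefel--Whitney condition up to $2^k-1$, which accounts for the factor of two in the statement.
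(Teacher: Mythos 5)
Your proof is correct and takes essentially the same route as the source \cite{Hoekzema2017} from which the present paper quotes this lemma without reproving it: the equivalence of the vanishing ranges of $w$ and $v$ is formal from Wu's formula, and the factor of two comes from Serre's decomposability of $Sq^m$ for $m$ not a power of $2$ together with the absence of powers of $2$ in the interval $(2^{k-1},2^k)$. I see no gaps.
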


Hence, whether a manifold is $k$-orientable can be established by considering the action of the Steenrod algebra on its cohomology. In particular, in order to determine whether an $n$-dimensional manifold is $k$-orientable, we need to evaluate the Steenrod squares of degree $2^j$ on all generators in degree $n-2^j$ for $j = 0, 1, ..., k-1$. Steenrod squares are generated by those of degree a power of two, so if all these operations vanish then we can conclude that the manifold is $k$-orientable.

\subsection{Establishing an additive basis}
The integral cohomology of $\OCP$ was determined in \cite{Toda1974}. The mod 2 cohomology of $\OHP$ was determined in \cite{Nakagawa2001}. Cohomology is generally presented as a graded ring, with ring generators and ring relations. However, in order to do computations we need to expand this compact information and determine what the groups look like individually, establishing an additive basis and relations in terms of homogeneous monomials in the ring generators in every degree.
In the case of $\OCP$ we do this in Wolfram Mathematica by generating a list of homogeneous monomials in every degree, computing homogeneous parts of the ideal of relations and solving this system of equations degreewise. The cohomology ring of $\OCP$ has two generators of degree 2 and 8 and two relations in degrees 18 and 24. The notebook can be found online$^\text{1}$, and the resulting additive basis plus all relations between monomials is shown in Table \ref{additiveOCP}.
The action of the Steenrod algebra on the cohomology of $\OCP$ was determined in \cite{Ishitoya1992}, and using this information, the rest of the computations for $\OCP$ in this paper could be done by hand.

The mod 2 cohomology of $\OHP$ has five generators in degree 2, 3, 12, 16 and 20, and twelve relations in degrees 9, 18, 19, 26, 27, 28, 36, 40, two in 44, 48 and 52. This means that for example in the top dimension 64, there are 123 homogeneous monomials in the generators and 245 relations amongst them. 
In this case Mathematica could no longer handle computations. However, Magma computational algebra system is a program specialised in algebraic computations such as these and was able to carry out the computation with ease.  Apart from establishing an additive basis, which is shown in Table \ref{additiveOHP}, a list of 49 pages of relations between monomials is listed online$^\text{1}$. For the case of dimension 64, this information is presented in section \ref{64monomials}. There are 123 monomials in the ring generators in this degree, of which eleven equal the fundamental class and the other 112 equal zero. 
This explicit information is required for the calculations in section \ref{OHP}.

\subsection{Expanding the Cartan formula}

In order to establish $k$-orientability we want to evaluate Steenrod operations on high-dimensional classes, which are written as monomials in ring generators. To do so we use the Cartan formula:
\begin{equation}
Sq ^n (x \smile y ) = \sum_{i+j =n} (Sq^i x) \smile (Sq^j y).
\end{equation}

On a larger product we can rewrite the formula as follows:
\begin{equation}
Sq^n \left(\prod_{j=1}^k x_{j} \right)  = \sum_{\{p\}} \prod_{j=1}^k Sq^{p_j} x_j,
\end{equation}

where $\{p\}$ ranges over the collection of ordered partitions of $n$ into $k$ numbers between 0 and $n$. A program was written in Mathematica by the author to aid in the expansion of this formula. The program can be found online$^\text{1}$ and can be applied more generally.
The use of the Mathematica code is essential as the expansion of the Cartan formula on larger products with high squares involves a large number of terms. For example, if we want to apply $Sq^8$ on a product of twelve generators, the sum over partitions contains 75,582 terms, each of which is a product of twelve squaring operations on a class. In the case of $\OHP$, each one of these operations has up to five terms when fully expanded in terms of monomials in the ring generators. In the end most terms appear an even number of times and hence vanish.

\subsection{Computing lower Steenrod operations}
Nakagawa partially established the action of $Sq^1$, $Sq^2$, $Sq^4$ and $Sq^8$ on all generators of the mod 2 cohomology of $\OHP$ in the process of determining this cohomology in \cite{Nakagawa2001}. 
When we compute
\begin{equation}
Sq^n \left(\prod_{j=1}^k x_{j} \right)
\end{equation}
by applying the Cartan formula, the output is a linear combination of products of terms of the form $Sq^i x_j$, for $x_j$ a ring generator of the cohomology, where $i$ ranges between 0 and $n$. Hence we need to determine values of the squaring operations that are not a power of two. We do so by using the Adem relations: for all $a$ and $b$ such that $a < 2b$, we have
\begin{equation}
Sq^a  Sq^b =  \sum_{c=0}^{\lfloor a/2\rfloor} {b-c-1 \choose a-2c} Sq^{a+b-c}  Sq^c.
\end{equation}

In particular we obtain

\begin{eqnarray}
Sq^{2n+1} &=& Sq^1 Sq^{2n} \\
Sq^6 &=& Sq^5 Sq^1 + Sq^2 Sq^4. 
\end{eqnarray}

We use these equations to compute the action of $Sq^3$, $Sq^5$, $Sq^6$ and $Sq^7$ on all generators of the cohomology of $\OHP$ in section \ref{smallsquares}.

\subsection{Determining $k$-orientability}

Concluding the above, a manifold $M$ is $k$-orientable if 
\begin{equation}
Sq^{2^i} (z_{n-{2^i}})=0
\end{equation}
for $i =0,.., k-1$ on all additive generators $z_{n-{2^i}}$ of $H^{n-{2^i}}(M; \Z/2)$. In our approach for determining whether $M$ is $k$-orientable, we establish the additive generators $z_{n-{2^i}}$ as monomials in the ring generators and expand the Cartan formula on this product to obtain a large expression in terms of squaring operations on ring generators. Finally we substitute the values of these operations into our expression and quotient by all relations between monomials in the top degree to obtain the value of $Sq^{2^i} (z_{n-{2^i}})$.

\section{Orientability of the first Rosenfeld plane $\OCP$}\label{OCP}

In this section we determine that the first Rosenfeld plane $\OCP$ is 2-orientable and not 3-orientable by performing calculations of the action of the Steenrod algebra on its cohomology.
The integral cohomology of this manifold was determined as a ring in \cite{Toda1974}:

\begin{equation}
H^*((\mathbb{O} \otimes \mathbb{C})\mathbb{P}^{2}; \Z)= \mathbb{Z}[t,w]/ (t^9 -3w^2 t, w^3 +15 w^2 t^4 - 9 w t^8),
\end{equation}

where $t$ is in degree 2 and $w$ in degree 8. From this expression we determined an additive basis in every degree using Mathematica. 

\vs

\begin{ttable}\label{additiveOCP}
	
	Below we give an explicit list of the groups $H^*(\OCP; \Z)$ and the values of the monomials in the ring generators $t$ and $w$. 
	The even dimensional groups all vanish.

\begin{tabular}{|l|c|l|l|}
	\hline
	$*$ & $H^* (\OCP; \Z)$   & generators & relations\\
	\hline
	0 & $\Z$  & 1 & \\
	2 & $\Z$  & $t$ & \\
	4 & $\Z$  & $t^2$ &\\
	6 & $\Z$  & $t^3 $ &\\
	8 & $\Z^2$  & $t^4$, $w$ &  \\
	10 & $\Z^2$  & $t^5$, $tw$ &  \\
	12 & $\Z^2$  & $t^6$, $t^2w$ &  \\
	14 & $\Z^2$  & $t^7$, $t^3 w$ &  
	\\
	16 & $\Z^3$ &  $t^8$, $t^4w$, $w^2$ & 
	\\
	18 & $\Z^2$  & $t^5w$, $t w^2$  &  $t^9 = 3 \, t w^2$
	\\
	20 & $\Z^2$  & $t^6w$, $t^2 w^2$  &  $t^{10} = 3 \, t^2 w^2$
	\\
	22 & $\Z^2$  & $t^7w$, $t^3 w^2$  &  $t^{11} = 3 \, t^3 w^2$
	\\
	24 & $\Z^2$  & $\gamma_1$, $\gamma_2$  &  
	$t^{12} = 3 \, \gamma_1 + 9 \, \gamma_2$;  	
	$t^8 w = 2 \, \gamma_1 + 5 \, \gamma_2$;  	\\ & & &
	$t^4 w^2 =  \gamma_1 + 3 \, \gamma_2$; 		
	$w^3 = 3 \, {\gamma_1}$.
	\\
	26 & $\Z$ & $\gamma_3$  &  
	$t^{13} = 78 \, \gamma_3$;  
	$t^9 w = 45 \, \gamma_3$; 	\\ & & &
	$t^5 w^2 = 26 \, \gamma_3$; 
	$t w^3 = 15 \, \gamma_3.$\\	
	28 & $\Z$ & $t\gamma_3$  &  
	$t^{14} = 78 \, t\gamma_3$;  	
	$t^{10} w = 45 \, t\gamma_3$;	\\ & & &
	$t^6 w^2 = 26 \, t\gamma_3$; 		
	$t^2 w^3 = 15 \, t\gamma_3.$\\	
	30 & $\Z$ & $t^2\gamma_3$  &  
	$t^{15} = 78 \, t^2\gamma_3$; 
	$t^{11} w = 45 \, t^2\gamma_3$;	\\ & & &
	$t^7 w^2 = 26 \, t^2\gamma_3$; 	
	$t^3 w^3 = 15 \, t^2\gamma_3.$\\	
	32 & $\Z$ & $t^3\gamma_3$  &  
	$t^{16} = 78 \, t^3\gamma_3$;  	
	$t^{12} w = 45 \, t^3\gamma_3$;	\\ & & &
	$t^8 w^2 = 26 \, t^3\gamma_3$; 	
	$t^4 w^3 = 15 \, t^3\gamma_3.$; \\ & & &
	$w^4 = 9 \, t^3\gamma_3 $\\	
	\hline
\end{tabular}
\end{ttable}
\vs

As the integral cohomology is concentrated in even degrees, the  mod 2 cohomology is given by 
\begin{equation}
H^*((\mathbb{O} \otimes \mathbb{C})\mathbb{P}^{2}; \Z/2)= H^*((\mathbb{O} \otimes \mathbb{C})\mathbb{P}^{2}; \Z) \otimes \Z/2. 
\end{equation}

We write $t'$ and $w'$ for the mod 2 reductions of the generators $t$ and $w$. The calculations are done by considering a general expression for the Steenrod operations, for which we subsequently substitute the coefficients as established in \cite{Ishitoya1992}.

\begin{lem}\label{Sq}
	The action of the Steenrod algebra on $H^*((\mathbb{O} \otimes \mathbb{C})\mathbb{P}^{2}; \Z/2)$ is fully determined by four coefficients $\alpha, \beta, \gamma, \delta \in \mathbb{Z}/2$, where:
	\begin{eqnarray}
	Sq^2 w' &=& \alpha t'^5 + \beta t'w';\\
	Sq^4 w' &=& \gamma t'^6 + \delta t'^2 w'.
	\end{eqnarray}
\end{lem}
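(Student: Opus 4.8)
The plan is to show that the entire action of the Steenrod algebra on $H^*(\OCP;\Z/2)$ is forced by its values on the two ring generators $t'$ (degree $2$) and $w'$ (degree $8$), and then to identify exactly which of those values are genuinely free. First I would invoke the structural axioms of the Steenrod algebra: every operation is a sum of composites of the $Sq^i$, and by the Cartan formula together with additivity the action of each $Sq^i$ on an arbitrary class is determined by its action on $t'$ and $w'$, since every element of $H^*(\OCP;\Z/2)$ is a polynomial in these two generators. Thus it suffices to determine $Sq^i t'$ and $Sq^i w'$ for all $i$.

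For $t'$ the values are completely forced: $Sq^0 t' = t'$, the instability axiom gives $Sq^2 t' = t'^2$ and $Sq^i t' = 0$ for $i>2$, while $Sq^1 t' = 0$ because $H^3(\OCP;\Z/2)=0$ (equivalently, $t'$ is the mod $2$ reduction of the integral class $t$). For $w'$ I would exploit the vanishing of the odd-degree cohomology recorded in Table~\ref{additiveOCP}: since $Sq^i w' \in H^{8+i}$, each odd operation $Sq^1 w',\, Sq^3 w',\, Sq^5 w',\, Sq^7 w'$ lands in a zero group and hence vanishes, while $Sq^0 w' = w'$, $Sq^8 w' = w'^2$, and $Sq^i w' = 0$ for $i>8$ by instability. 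This leaves only the three even operations $Sq^2 w' \in H^{10}$, $Sq^4 w' \in H^{12}$, and $Sq^6 w' \in H^{14}$ to analyse. Reading the additive bases off Table~\ref{additiveOCP}, one has $H^{10}=\Z/2\{t'^5,\, t'w'\}$ and $H^{12}=\Z/2\{t'^6,\, t'^2w'\}$, and writing $Sq^2 w'$ and $Sq^4 w'$ in these bases is precisely what introduces the four coefficients $\alpha,\beta,\gamma,\delta$.

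It then remains to show that $Sq^6 w'$ is not independent data but is determined by $\alpha,\beta,\gamma,\delta$. Here I would apply the Adem relation $Sq^6 = Sq^2 Sq^4 + Sq^5 Sq^1$; since $Sq^1 w' = 0$ this collapses to $Sq^6 w' = Sq^2(Sq^4 w') = Sq^2(\gamma t'^6 + \delta t'^2 w')$, and expanding by the Cartan formula (using the already-known $Sq^i t'$ and the expression for $Sq^2 w'$) exhibits $Sq^6 w'$ as an explicit polynomial in $\alpha,\beta,\gamma,\delta$ in the basis of $H^{14}$. The only nontrivial point — and hence the main obstacle — is carrying out this last expansion correctly, in particular reducing mod $2$ the binomial coefficients produced by Cartan on powers of $t'$, so as to confirm that no further freedom enters through $Sq^6$. (One may also sanity-check the outcome against relations such as $Sq^2 Sq^2 = Sq^3 Sq^1$, which impose consistency conditions among the coefficients; these constrain which four-tuples can occur but do not affect the claim that the action is determined by them, and are pinned down when the values from \cite{Ishitoya1992} are substituted.) Once $Sq^6 w'$ is expressed in the four coefficients, every $Sq^i$ on each generator, and therefore on all of $H^*(\OCP;\Z/2)$, is determined by $\alpha,\beta,\gamma,\delta$, which proves the lemma.
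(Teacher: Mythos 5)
Your proposal is correct and follows essentially the same route as the paper's proof: the odd squares vanish because the odd-degree cohomology is zero, the Cartan formula reduces the whole action to its values on the ring generators $t'$ and $w'$, and $Sq^2 w'$, $Sq^4 w'$ are expanded in the bases of $H^{10}$ and $H^{12}$ to produce $\alpha,\beta,\gamma,\delta$. The only difference is that you spell out, via the Adem relation $Sq^6 = Sq^2 Sq^4 + Sq^5 Sq^1$, the step the paper leaves implicit by appealing to the fact that the Steenrod algebra is generated by the $Sq^{2^i}$.
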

\begin{proof}[Proof of Lemma \ref{Sq}]
	
	As there is no odd degree cohomology, all odd squares are zero.
	The action of $Sq^i$ on any class can be decomposed into its action on $t'$ and $w'$ by use of the Cartan formula. Hence the entire action of the Steenrod algebra is determined by $Sq^2 t' = t'^2$, $Sq^2 w' $, $Sq^4 w'$ and $Sq^8 w' = w'^2$. $H^{10}((\mathbb{O} \otimes \mathbb{C})\mathbb{P}^{2}; \Z/2)$ is spanned by $t'^5$ and $t'w'$, therefore $Sq^2 w' $ is some linear combination of these. Similarly $H^{12}((\mathbb{O} \otimes \mathbb{C})\mathbb{P}^{2}; \Z/2)$ is spanned by $t'^6$ and $t'^2 w'$.
\end{proof}

We list the signature of the manifold, which follows from the explicit computations.

\begin{lem}\label{signature}
	The signature of $\OCP$ is 3.
\end{lem}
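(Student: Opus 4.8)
The plan is to read the signature straight off the cup-product intersection form on the middle-dimensional cohomology, using the additive basis and integral relations already tabulated in Table \ref{additiveOCP}. Since $\OCP$ has dimension $32$, its signature is the signature of the symmetric bilinear form
\[
Q(x,y) = \langle x \smile y, [\OCP]\rangle
\]
on $H^{16}(\OCP;\Z)\cong\Z^3$, which by the table has basis $t^8$, $t^4w$, $w^2$.

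First I would assemble the Gram matrix of $Q$ in this basis. Each pairwise product — $t^8\smile t^8 = t^{16}$, $t^8\smile t^4w = t^{12}w$, $t^8\smile w^2 = t^8w^2$, $t^4w\smile t^4w = t^8w^2$, $t^4w\smile w^2 = t^4w^3$, and $w^2\smile w^2 = w^4$ — lands in $H^{32}(\OCP;\Z)\cong\Z$, generated by $t^3\gamma_3$. Reading off the coefficients from the degree-$32$ relations in Table \ref{additiveOCP} (namely $t^{16}=78\,t^3\gamma_3$, $t^{12}w=45\,t^3\gamma_3$, $t^8w^2=26\,t^3\gamma_3$, $t^4w^3=15\,t^3\gamma_3$, $w^4=9\,t^3\gamma_3$), the matrix becomes
\[
Q = \begin{pmatrix} 78 & 45 & 26 \\ 45 & 26 & 15 \\ 26 & 15 & 9 \end{pmatrix}.
\]

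Next I would determine the signature of $Q$, for which Sylvester's criterion on the leading principal minors is the cleanest route. The minors are $78>0$, then $78\cdot 26 - 45^2 = 3 > 0$, and finally $\det Q = 1 > 0$. Hence $Q$ is positive definite, all three eigenvalues are positive, and the signature is $3$.

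This computation carries essentially no obstacle, since the genuinely hard work — establishing the additive basis and the integral relations in $H^{32}$ — is already packaged in Table \ref{additiveOCP}. The only points needing care are reading off the correct coefficients for each product and evaluating the $3\times 3$ determinant correctly; the outcome $\det Q = 1$ serves as a reassuring consistency check, since Poincar\'e duality forces the integral intersection form of a closed oriented manifold to be unimodular.
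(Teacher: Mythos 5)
Your proposal is correct and follows essentially the same route as the paper: both read off the Gram matrix $\left(\begin{smallmatrix} 78 & 45 & 26 \\ 45 & 26 & 15 \\ 26 & 15 & 9 \end{smallmatrix}\right)$ of the cup product on $H^{16}(\OCP;\Z)$ from the degree-32 relations and conclude the form is positive definite. The only difference is cosmetic: the paper asserts the three eigenvalues are positive, while you verify definiteness via Sylvester's criterion (minors $78$, $3$, $1$), which also gives the nice unimodularity check $\det Q = 1$.
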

\begin{proof}[Proof of Lemma \ref{signature}]
	Computer calculations show that $$H^{32}(\OCP; \Z)\cong\Z$$ with $t^{16}=78$, $t^{12}w=45$, $t^{8}w^2=26$, $t^4 w^3=15$ and $w^4=9$. $$H^{16}(\OCP; \Z)\cong\Z^3$$ spanned by $t^8$, $t^4 w$ and $w^2$ hence in this basis the cup square is given by
	\begin{equation}
	\left(
	\begin{matrix}
	78 & 45 & 26 \\
	45 & 26 & 15 \\
	26 & 15 & 9
	\end{matrix}
	\right)
	\end{equation}
	which has three positive eigenvalues.
\end{proof}

The orientability of $\OCP$ can be established by computing the action of the Steenrod squares reaching up to the top dimension.  

\begin{prop}\label{SqOCP2}
	For $\alpha, \beta, \gamma, \delta \in \mathbb{Z}/2$ as in Lemma \ref{Sq}, $(\mathbb{O} \otimes \mathbb{C})\mathbb{P}^{2}$ is 2-orientable if $\beta=1$ and 3-orientable if also $\delta=1$.
\end{prop}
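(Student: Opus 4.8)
The plan is to reduce $k$-orientability to the vanishing of a single Steenrod square landing in the top cohomology, and then to evaluate that square by the Cartan formula. By the criterion recalled in Section~\ref{section:methods} --- Lemma~\ref{orientableWu} together with the fact that the Steenrod algebra is generated by the squares $Sq^{2^j}$ --- the $32$-dimensional manifold $\OCP$ is $2$-orientable exactly when $Sq^1$ vanishes on $H^{31}$ and $Sq^2$ vanishes on $H^{30}$, and $3$-orientable when moreover $Sq^4$ vanishes on $H^{28}$ (all with $\Z/2$ coefficients). Since the cohomology is concentrated in even degrees, $H^{31}(\OCP;\Z/2)=0$ and the $Sq^1$ condition is vacuous, so it suffices to study $Sq^2\colon H^{30}\to H^{32}$ and $Sq^4\colon H^{28}\to H^{32}$.

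First I would read off the relevant groups from Table~\ref{additiveOCP} after reduction mod $2$. Each of $H^{28}$, $H^{30}$, $H^{32}$ is integrally infinite cyclic, hence $\Z/2$ after reduction, and inspecting the parity of the coefficients in the listed relations identifies $t'^{10}w'$ as a generator of $H^{28}$, $t'^{11}w'$ as a generator of $H^{30}$, and $t'^{12}w'$ as the fundamental class $[\OCP]$ generating $H^{32}$, while in $H^{32}$ the competing monomials $t'^{16}$ and $t'^8w'^2$ reduce to $0$. As $H^{30}$ and $H^{28}$ are each one-dimensional, it is enough to evaluate $Sq^2$ on $t'^{11}w'$ and $Sq^4$ on $t'^{10}w'$.

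Next I would run the two Cartan expansions. From $Sq(t')=t'+t'^2$ one has $Sq^{2m}(t'^n)=\binom{n}{m}t'^{\,n+m}$, with all odd squares of $t'$-powers vanishing, so the only contributions with odd binomial coefficient come from $\binom{11}{1}$ and $\binom{10}{2}$; feeding in $Sq^2 w'=\alpha t'^5+\beta t'w'$ and $Sq^4 w'=\gamma t'^6+\delta t'^2 w'$ from Lemma~\ref{Sq}, the degree-$32$ parts reduce to
\begin{align}
Sq^2(t'^{11}w') &= \alpha\,t'^{16}+(\beta+1)\,t'^{12}w',\\
Sq^4(t'^{10}w') &= \gamma\,t'^{16}+(\delta+1)\,t'^{12}w'.
\end{align}
Applying the top-degree relations $t'^{16}=0$ and $t'^{12}w'=[\OCP]$ turns these into $(\beta+1)[\OCP]$ and $(\delta+1)[\OCP]$. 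Thus $Sq^2$ annihilates $H^{30}$ iff $\beta=1$, giving $2$-orientability, and $Sq^4$ additionally annihilates $H^{28}$ iff $\delta=1$, giving $3$-orientability; the coefficients $\alpha,\gamma$ drop out because $t'^{16}$ is $2$-divisible in the top group. In particular the stated sufficient conditions hold, and are in fact equivalences.

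The calculation is short, so the main points needing care are bookkeeping ones: correctly reducing the top-degree monomials so that $t'^{12}w'$, rather than $t'^{16}$ or $w'^4$, represents the fundamental class, and checking that no higher squares $Sq^6 w'$ or $Sq^8 w'$ intervene --- which holds only because the constraint $i+j\in\{2,4\}$ in the Cartan sum forces $j\le 4$. As a consistency check I would redo both squares on the alternative generators $t'^3w'^3$ of $H^{30}$ and $t'^2w'^3$ of $H^{28}$; these require genuinely different, longer Cartan expansions yet must return the same multiples of $[\OCP]$.
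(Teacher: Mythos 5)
Your proposal is correct and follows essentially the same route as the paper: identify $t'^{11}w'$ and $t'^{10}w'$ as generators of $H^{30}$ and $H^{28}$, expand $Sq^2$ and $Sq^4$ by the Cartan formula using Lemma \ref{Sq}, and reduce in the top degree where $t'^{16}=0$ and $t'^{12}w'$ is the fundamental class, yielding $(1+\beta)$ and $(1+\delta)$ respectively. The consistency check on the alternative representatives $t'^3w'^3$ and $t'^2w'^3$ is also carried out in the paper.
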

\begin{proof}
	As $H^{1}(\OCP; \Z)=0$, we have $w_1=0$ hence $\OCP$ is 1-orientable.

	Computer calculations show that $$H^{30}(\OCP; \Z) \cong \Z,$$with $t^{15}=78$,  $t^{11}w=45$, $t^{7}w^2=26$ and $t^3 w^3=15$. In order to check whether $w_2=0$, we check whether $$Sq^2:H^{30}(\OCP; \Z/2) \rightarrow H^{32}(\OCP; \Z/2)$$ vanishes. Denote by $t'$ and $w'$ the images of $t$ and $w$ under the ring homomorphism given by tensoring $H^{*}(\OCP; \Z)$ with $\Z/2$. Then the  generator of $H^{30}(\OCP; \Z/2)$ can be represented as $t'^{11} w'$ or as $t'^3 w'^3$. Applying $Sq^2$ to this and using the Cartan formula:
	
	\begin{eqnarray}
	Sq^2  (t'^{11} w') &=& Sq^2 (t'^{11})  w' + t'^{11} Sq^2 (w')\nonumber\\
	&=& t'^{12} w' + t'^{11} \left(\alpha t'^5 + \beta t'w'\right) \nonumber\\
	&=&  (1+ \beta)\, t'^{12} w' + \alpha t'^{16} \nonumber\\
	&=& (1+ \beta)\, t'^{12} w' \nonumber\\
	&=& (1+ \beta)
	\end{eqnarray}
	
	And similarly
	
	\begin{eqnarray}
	Sq^2  (t'^3 w'^3) &=& (1+ \beta) t'^{4} w'^3 \nonumber\\
	&=& (1+ \beta).
	\end{eqnarray}
	
	Both $ t'^{12} w' $ and $ t'^{4} w'^3$ represent the generator of $H^{32}(\OCP; \Z/2)$, hence $w_2=0$ if and only if $\beta =1$.
	
	$$H^{28}(\OCP; \Z)\cong \Z$$ with $t^{14}=78$,  $t^{10}w=45$, $t^{6}w^2=26$ and $t^2 w^3=15$, hence $$H^{28}(\OCP; \Z/2) \cong \Z/2$$ generated by  $t'^{10}w'$ or equivalently $t'^2 w'^3$.
	
	\begin{eqnarray}
	Sq^4  (t'^{10} w') &=& Sq^4  (t'^{10} ) w' + Sq^2 ( t'^{10} ) Sq^2 ( w' ) + t'^{10} Sq^4  ( w')\nonumber\\
	&=& {10 \choose 2} \left(Sq^2 (t')\right)^2 t'^8 w' + 0 + t'^{10} (\gamma t'^6 + \delta t'^2 w') \nonumber\\
	&=& (1+ \delta) \, t'^{12} w' + \gamma t'^{16}\nonumber\\
	&=&(1+ \delta) \, t'^{12} w' \nonumber\\
	&=&(1+ \delta)
	\end{eqnarray}
	
	and similarly
	
	\begin{eqnarray}
	Sq^4  (t'^2 w'^3) &=& (1+ \delta)\, t'^{4} w'^3 \nonumber\\
	&=& (1+ \delta).
	\end{eqnarray}

	Hence $\OCP$ is 3-orientable if and only if $\beta=1$ and $\delta=1$.

\end{proof}

\begin{thm}\label{ThmOCP}
	$(\mathbb{O} \otimes \mathbb{C})\mathbb{P}^{2}$ is 2-orientable (spin) and not 3-orientable.
\end{thm}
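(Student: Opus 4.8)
The plan is to obtain the theorem as a direct corollary of Proposition \ref{SqOCP2}, by substituting the explicit values of the coefficients $\beta$ and $\delta$ of Lemma \ref{Sq} that are recorded in \cite{Ishitoya1992}. Proposition \ref{SqOCP2} has already done the heavy lifting: it shows that the relevant Steenrod squares into the top degree evaluate to $1+\beta$ (for $Sq^2$ on $H^{30}$) and to $1+\delta$ (for $Sq^4$ on $H^{28}$). Together with Lemma \ref{orientableWu} and the fact that $H^{\mathrm{odd}}(\OCP;\Z/2)=0$---which makes the odd Wu classes $v_1$ and $v_3$ vanish automatically---this yields that $\OCP$ is $2$-orientable exactly when $\beta=1$, and $3$-orientable exactly when in addition $\delta=1$. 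So the entire theorem reduces to establishing the two equalities $\beta=1$ and $\delta=0$.

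First I would record the cheap input: since $H^1(\OCP;\Z)=0$ we have $w_1=0$, so the manifold is orientable, and it remains only to read off $\beta$ and $\delta$. I would then import the Steenrod action computed in \cite{Ishitoya1992} and match it against the normal form of Lemma \ref{Sq}, expecting $\beta=1$ and $\delta=0$. The value $\beta=1$ makes $Sq^2$ into the top degree vanish, so that $v_2=0$ and the manifold is $2$-orientable, i.e.\ spin. The value $\delta=0$ makes $Sq^4$ on $H^{28}$ equal to $1\neq 0$, exhibiting a nonvanishing Wu class $v_4$; by Lemma \ref{orientableWu} this is precisely the obstruction to $3$-orientability. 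Feeding these two numbers into Proposition \ref{SqOCP2} then gives the theorem.

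The main obstacle is not any new computation but a matter of bookkeeping and conventions: one must translate the generator choices of \cite{Ishitoya1992} into the $(t',w')$ basis used here and verify that the mod $2$ reduction genuinely produces the pair $(\beta,\delta)=(1,0)$ rather than some other pair. The whole distinction between $2$- and $3$-orientability for this manifold rests on the single coefficient $\delta$, that is, on whether the monomial $t'^2 w'$ survives in $Sq^4 w'$; the argument stands or falls on confirming $\delta=0$. All the surrounding work---expanding the Cartan formula, reducing modulo the relations of Table \ref{additiveOCP}, and identifying the top class---has already been carried out inside Proposition \ref{SqOCP2}, so no further calculation is needed once the coefficients are known.
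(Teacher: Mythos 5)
Your proposal is correct and coincides with the paper's own proof of Theorem \ref{ThmOCP}, which simply quotes $\alpha=\beta=\gamma=1$ and $\delta=0$ from \cite{Ishitoya1992} and feeds them into Proposition \ref{SqOCP2} (whose proof establishes the ``if and only if'' needed to rule out $3$-orientability from $\delta=0$). No further comment is needed.
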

\begin{proof}
	By \cite{Ishitoya1992}, $\alpha=\beta=\gamma=1$ and $\delta=0$. Hence by Proposition \ref{SqOCP2}, $\OCP$ is 2-orientable but not 3-orientable.
\end{proof}

\section{Orientability of the second Rosenfeld plane $\OHP$}\label{OHP}

\subsection{Recalling what is known about $\OHP$}

For $(\mathbb{O} \otimes \mathbb{H})\mathbb{P}^{2}$ the mod 2 cohomology was established as a ring by \cite{Nakagawa2001}, and it is generated by five classes in degrees 2, 3, 12, 16 and 20, modulo an ideal generated by twelve relations. In \cite{Nakagawa2001} also the action of $Sq^1$, $Sq^2$, $Sq^4$ and $Sq^8$ was determined on all generators, modulo twelve undetermined coefficients.
We here perform calculations that show that $(\mathbb{O} \otimes \mathbb{H})\mathbb{P}^{2}$ is at least 3-orientable, but because of two of the undetermined coefficients it cannot be established from this information whether $\OHP$ is 4-orientable.

\begin{thm}[\cite{Nakagawa2001}]\label{Nakagawa}
	
	\begin{equation}
	H^* (\OHP, \Z/2) = \Z/2 \left[y_2,y_3, y_{12}, y_{16}, y_{20} \right] /J
	\end{equation}
	
	where $J$ is an ideal generated by twelve homogeneous relations:
	\begin{equation*}
	J=
	\left(
	\begin{array}{l}
	y_3^3, 
	y_{16}y_2 + y_{12}y_3^2 + y_2^6 y_3^2, 
	y_{16}y_3, 
	y_{12}^2 y_2 + y_{12}y_2^4y_3^2 + y_{20} y_3^2, \\
	y_{12}^2y_3, 
	y_{12} y_{16} + y_2^{14} + y_{12} y_2^5 y_3^2 + y_2^{11} y_3^2, 
	y_{12}^3 + y_{16} y_{20} + y_2^5y_{20} y_3^2,\\ 
	y_{12}^2 y_{16} + y_{20}^2 + y_{12} y_2^{11} y_3^2, 
	y_{12}^2 y_{20} + y_{12} y_2^{13} y_3^2 + y_{12} y_2^3y_{20} y_3^2, \\
	y_{12} y_{16}^2 + y_{12} y_2^{13}y_3^2, 
	y_{16}^3 + y_{12} y_{16} y_{20} + y_{12} y_2^5y_{20} y_3^2, 
	y_{16}^2y_{20} + y_2^{13}y_{20} y_3^2
	\end{array}\right)
	\end{equation*}

The following is known about Steenrod operations on the generators.
	
	$Sq^1 y_i=0$ unless $i=2$, in which case 
	\begin{eqnarray}
	Sq^1 y_2 &=& y_3.\nonumber
	\end{eqnarray}

We have that
	\begin{eqnarray*}
		Sq^2 (y_{12}) &=& y_2^7 + y_2 y_{12} + y_2^4 y_3^2,\\
		Sq^4 (y_{12}) &=& y_2^8 + y_2^2 y_{12} + \alpha' y_2^5 y_3^2,\\
		Sq^8 (y_{12}) &=& y_{20} + y_2^4 y_{12} + \alpha'' y_2^7 y_3^2+ \beta'' y_2 y_3^2 y_{12},
	\end{eqnarray*}
	for some coefficients $\alpha'$, $\alpha''$, $\beta'' \in \Z/2$,
	\begin{eqnarray*}
		Sq^2 (y_{16}) &=& 0,\\
		Sq^4 (y_{16}) &=& y_2^7 y_3^2,\\
		Sq^8 (y_{16}) &=& y_{12}^2 + \gamma'' y_2^9 y_3^2+ \delta'' y_2^3 y_3^2 y_{12},
	\end{eqnarray*}
	for some coefficients $\gamma''$, $\delta'' \in \Z/2$, and
	\begin{eqnarray*}
		Sq^2 (y_{20}) &=& y_2^{11} + y_2 y_{20} + \mu y_2^8 y_3^2 + \nu y_2^2 y_3^2 y_{12},\\
		Sq^4 (y_{20}) &=& y_{12}^2 + y_2^6 y_{12} + \mu' y_2^9 y_3^2+ \nu' y_2^3 y_3^2 y_{12},\\
		Sq^8 (y_{20}) &=& y_{12}y_{16} + y_2^8 y_{12} + \lambda'' y_2^{11} y_3^2+  \mu'' y_2^5 y_3^2 y_{12}+ \nu'' y_2 y_3^2 y_{20},
	\end{eqnarray*}
	for some coefficients $\mu$, $\nu$, $\mu'$, $\nu'$, $\lambda''$, $\mu''$, $\nu'' \in \Z/2$.  
\end{thm}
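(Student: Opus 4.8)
The plan is to establish both the ring structure and the Steenrod action by realizing $\OHP$ as the homogeneous space $E_7/H$ with $H=(Spin(12)\times Sp(1))/(\Z/2)$ and exploiting the associated classifying-space fibration. First I would set up the Borel fibration
$$\OHP \;\simeq\; E_7/H \;\longrightarrow\; BH \;\longrightarrow\; BE_7$$
(whose fiber over the basepoint of $BE_7$ is $G/H$) and run the Eilenberg--Moore spectral sequence, whose $E_2$-page is $\mathrm{Tor}_{H^*(BE_7;\Z/2)}\bigl(H^*(BH;\Z/2),\Z/2\bigr)$. The two required inputs are the mod $2$ cohomology of $BE_7$, known from the work of Kono--Mimura and collaborators on classifying spaces of exceptional Lie groups, and that of $BH$, which I would assemble from $BSpin(12)$, $BSp(1)$ and the effect of the central $\Z/2$ identification. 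The ring generators $y_2,y_3,y_{12},y_{16},y_{20}$ and the twelve relations generating $J$ should then emerge from identifying the surviving Tor-classes, computing the differentials, and — crucially — resolving the multiplicative extension problems that upgrade the associated-graded ring to the honest ring $H^*(\OHP;\Z/2)$.

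For the Steenrod action I would proceed purely algebraically once the ring is in hand. The unstable conditions $Sq^i x=0$ for $i>|x|$ and $Sq^{|x|}x=x^2$, together with $Sq^1$ being the Bockstein (which forces $Sq^1 y_2=y_3$ and annihilates the remaining generators, exactly as stated), mean that each $Sq^{2^j}y_i$ is an \emph{a priori} unknown $\Z/2$-linear combination of the finitely many monomials in the target degree $2^j+i$, whose basis is read off from the additive description of the cohomology. I would pin down the coefficients by combining naturality — the ``leading'' terms, such as the powers of $y_2$ and the class $y_{20}$ appearing in $Sq^8 y_{12}$, are images of characteristic classes from $H^*(BH;\Z/2)$ on which Steenrod operations are computed by Wu-type formulas — with internal consistency: applying $Sq^{2^j}$ to each defining relation $r\in J$, expanding by the Cartan formula, and imposing $Sq^{2^j}r=0$ yields a large linear system in the unknown coefficients. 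Reducing operations of non-dyadic degree via the Adem relations, as in the displayed identities $Sq^{2n+1}=Sq^1 Sq^{2n}$ and $Sq^6=Sq^5 Sq^1+Sq^2 Sq^4$, supplies still further equations.

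Solving this system fixes most coefficients while leaving a residual family free, and these residual parameters are exactly $\alpha',\alpha'',\beta'',\gamma'',\delta'',\mu,\nu,\mu',\nu',\lambda'',\mu'',\nu''$. I expect the main obstacle to be twofold. On the topological side, resolving the hidden multiplicative extensions in the Eilenberg--Moore spectral sequence — verifying, for instance, that $y_{12}^3$, $y_{16}y_{20}$ and $y_2^5 y_{20} y_3^2$ genuinely combine into a single relation rather than remaining split in the associated graded — is delicate, and this is where most of the real work lies. On the algebraic side, the very persistence of undetermined coefficients shows that the unstable, Cartan, Adem and naturality constraints are jointly \emph{insufficient} to determine the full action; fixing the survivors would require finer geometric input (for example secondary-operation or $K$-theoretic data), and it is precisely this residual indeterminacy, concentrated in $\beta''$ and $\nu''$, that later obstructs deciding the $4$-orientability of $\OHP$.
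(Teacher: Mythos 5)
First, a point of comparison: the paper does not prove this statement at all. It is imported verbatim from Nakagawa's work (\cite{Nakagawa2001}), and the entire content of the ``proof'' in the paper is the citation. So there is no in-paper argument to measure your proposal against; what you have written is a reconstruction of the strategy one would find in the cited literature (Toda's method and the Kono--Mimura computations for classifying spaces of exceptional groups, fed into a spectral sequence for $E_7/H \to BH \to BE_7$). As a description of that strategy it is broadly on target, and your closing observation --- that the surviving free parameters are exactly the twelve undetermined coefficients, and that this indeterminacy is what later blocks the $4$-orientability question --- correctly identifies why the theorem is stated with unknowns.

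As a proof, however, the proposal has genuine gaps, because every substantive step is deferred. The two inputs you name ($H^*(BE_7;\Z/2)$ and $H^*(BH;\Z/2)$ with the central $\Z/2$ identification), the Tor computation, the differentials, and the multiplicative extensions are precisely where all the work lives, and none of it is carried out; the twelve relations cannot ``emerge'' without it. More concretely, several facts you treat as automatic are not. The claim that $Sq^1$ being the Bockstein \emph{forces} $Sq^1 y_2 = y_3$ and annihilates $y_{12}, y_{16}, y_{20}$ fails on degree grounds alone: the targets $H^3$, $H^{13}$, $H^{17}$, $H^{21}$ are all nonzero (of dimensions $1,1,2,2$ by the additive table), so each of these values is a nontrivial computation requiring integral or naturality input, not a formal consequence. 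Likewise the unstable conditions $Sq^i x = 0$ for $i > |x|$ and $Sq^{|x|}x = x^2$ say nothing about $Sq^8 y_{12}$ or $Sq^8 y_{16}$, where $2^j < |y_i|$; the leading terms such as $y_{20}$ in $Sq^8 y_{12}$ and $y_{12}^2$ in $Sq^8 y_{16}$ are exactly the hard part and must come from an explicit naturality argument against $BH$, which you gesture at but do not set up (in particular you would need to identify which classes of $H^*(\OHP;\Z/2)$ are pulled back from $BH$, which is itself nonobvious). Finally, the assertion that the Cartan/Adem/naturality linear system leaves free \emph{exactly} the listed parameters is an empirical outcome of solving that system, not something that can be asserted in advance.
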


In order to determine the entire action of the Steenrod algebra on the cohomology of $\OHP$, one would need to establish the values of the twelve unknown coefficients above and compute $Sq^{16} y_{20}$.

\subsection{3-orientability of $\OHP$}

The orientability of $\OHP$ can be established by calculating the actions of the Steenrod squares that reach up to the top dimension. An additive basis for the cohomology is listed in section \ref{additiveOHP}, and we will moreover need the values of all monomials in the top degree 64 which is given in section \ref{degree64}. A full list of additive relations can be found online$^\text{1}$.
This description lists all monomials in the five ring generators $y_2$, $y_3$, $y_{12}$, $y_{16}$ and $y_{20}$ up to degree 64, listing which of these are zero in the cohomology and which are equated. 



\begin{lem}
	$\OHP$ is 1-orientable.
\end{lem}
\begin{proof}
	The first generator of the mod 2 cohomology of $\OHP$ is $y_2$ of rank 2, hence $H^1(\OHP, \Z/2) = 0$. Therefore $w_1=0$.
\end{proof}

\begin{thm}
	$\OHP$ is 2-orientable.
\end{thm}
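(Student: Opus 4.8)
The plan is to invoke Lemma \ref{orientableWu}: since $\OHP$ is a closed connected manifold, 2-orientability is equivalent to $v_1=0$ and $v_2=0$. We have already seen that $w_1=0$, hence $v_1=0$, so the whole statement reduces to showing that the Wu class $v_2\in H^2(\OHP;\Z/2)$ vanishes. By the defining property of the Wu class, $v_2=0$ if and only if the map $Sq^2\colon H^{62}(\OHP;\Z/2)\to H^{64}(\OHP;\Z/2)$ is zero, so the task is to evaluate $Sq^2$ reaching up to the top dimension, exactly as in the treatment of $\OCP$.

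The key simplification is dimensional. Because $H^2(\OHP;\Z/2)$ is generated by the single class $y_2$, Poincar\'e duality over $\Z/2$ forces $H^{62}(\OHP;\Z/2)$ to be one-dimensional as well. It therefore suffices to exhibit one nonzero degree-$62$ monomial $z_{62}$ in the ring generators — which then automatically spans $H^{62}$ — and to show $Sq^2 z_{62}=0$. Indeed, writing $v_2=\epsilon\,y_2$ with $\epsilon\in\Z/2$, the pairing $H^2\times H^{62}\to H^{64}\cong\Z/2$ is perfect, so $y_2\,z_{62}$ is the fundamental class and $Sq^2 z_{62}=v_2\smile z_{62}=\epsilon\,[\OHP]$; hence $Sq^2 z_{62}=0$ would give $\epsilon=0$ and therefore $v_2=0$.

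To carry this out I would choose $z_{62}$ so as to avoid the undetermined coefficients of Theorem \ref{Nakagawa}. The actions $Sq^2 y_2=y_2^2$, $Sq^2 y_3=y_2 y_3$ (obtained from $Sq^1 y_2=y_3$ together with the Adem relation $Sq^1 Sq^2=Sq^3$ and $Sq^3 y_3=y_3^2$, using that $H^5$ is spanned by $y_2 y_3$), $Sq^2 y_{16}=0$, and $Sq^2 y_{12}=y_2^7+y_2 y_{12}+y_2^4 y_3^2$ are all fully determined, whereas $Sq^2 y_{20}$ still carries the unknown coefficients $\mu,\nu$. If the generator of $H^{62}$ admits a representative that is a monomial in $y_2,y_3,y_{12}$ alone, then expanding $Sq^2 z_{62}$ by the Cartan formula involves only determined values, and the computation reduces to substituting them and quotienting by the relations in degree $64$ recorded in section \ref{degree64}. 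Otherwise one must additionally check that the $\mu$- and $\nu$-dependent terms cancel upon reduction modulo those relations, so that the conclusion stays unconditional.

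The main obstacle is computational rather than conceptual: expanding $Sq^2$ on a degree-$62$ monomial via the Cartan formula produces many terms, and collapsing the result into the one-dimensional $H^{64}$ requires the full list of degree-$64$ relations among monomials. This is precisely the bookkeeping that the Mathematica/Magma toolchain of Section \ref{section:methods} is built to handle; the only genuinely delicate point is controlling any contribution coming from $Sq^2 y_{20}$, and I expect that choosing $z_{62}$ free of $y_{20}$ removes the indeterminacy entirely and yields $Sq^2 z_{62}=0$, hence $v_2=0$ and 2-orientability.
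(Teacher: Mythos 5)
Your proposal follows essentially the same route as the paper's proof: reduce 2-orientability to the vanishing of $Sq^2\colon H^{62}(\OHP;\Z/2)\to H^{64}(\OHP;\Z/2)$ and evaluate it on a generator of the one-dimensional $H^{62}$ via the Cartan formula and the list of degree-64 monomial relations. The paper uses the representative $y_2\,y_{20}^3$ and checks directly that the $\mu$- and $\nu$-dependent terms coming from $Sq^2 y_{20}$ all land on monomials that vanish in degree 64 (your ``otherwise'' fallback), while your preferred $y_{20}$-free representative also exists --- e.g.\ $y_2^{13}y_{12}^3$, which is nonzero because $y_2^{14}y_{12}^3$ equals the fundamental class --- so either variant completes the argument.
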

\begin{proof}
	In order to establish 2-orientability of $\OHP$, we will compute the action of $Sq^2$ on $H^{62}(\OHP; \Z/2)$, which is generated by $y_2 y_{20}^3$, as can be read off in the table listed in section \ref{additiveOHP}. The Cartan formula for $Sq^2$ on a 4-term product has 10 terms. Applying this using the Mathematica code and reducing modulo 2 yields:
	\begin{eqnarray}
	Sq^2 (y_2 y_{20}^3) &=& \sum_{i+j+k+l = 2} Sq^i y_2 \,Sq^j y_{20}\, Sq^k y_{20}\, Sq^l y_{20}\nonumber\\
	&=& (Sq^1 y_2) y_{20}^2 (Sq^1 y_{20}) + y_2 y_{20} (Sq^1 y_{20})^2 \\
	& & \hspace{60pt} + (Sq^2 y_2) y_{20}^3 + y_2 y_{20} (Sq^2 y_{20}).\nonumber
\end{eqnarray}
	
We now substitute the values of the squaring operations from Theorem \ref{Nakagawa} to obtain
\begin{eqnarray}
Sq^2 (y_2 y_{20}^3) &=& y_2 y_{20}^3 + y_2 y_{20} (y_2^{11} + y_2 y_{20} + \mu y_2^8 y_3^2 + \nu y_2^2 y_3^2 y_{12}) \nonumber \\
&=& y_2^{12} y_{20}^2 + \mu y_2^9 y_3^2 y_{20}^2 + \nu y_2^3 y_3^2 y_{12} y_{20}^2 \nonumber\\
&=& 0,
\end{eqnarray}	
where the last equality follows from the fact that all three monomials vanish in $H^{64}(\OHP; \Z/2)$ as we can read off in section \ref{degree64}.
	
Hence we see that the $Sq^2$ to the top dimension vanishes, thus $w_2=0$. As $\OHP$ is a smooth manifold, $w_3=0$ as well.
\end{proof}

\begin{thm}
	$\OHP$ is 3-orientable.
\end{thm}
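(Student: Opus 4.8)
The plan is to apply the criterion from Section \ref{section:methods}: by Lemma \ref{orientableWu}, $\OHP$ is 3-orientable precisely when $v_i = 0$ for $0 < i \leq 4$, and since the Steenrod squares are generated by those of degree a power of two it suffices to check that $Sq^1$, $Sq^2$ and $Sq^4$ vanish when they reach the top group $H^{64}(\OHP; \Z/2)$. The first two of these have already been handled in the proofs of 1- and 2-orientability, so the entire content of the theorem reduces to showing that
\begin{equation*}
Sq^4 \colon H^{60}(\OHP; \Z/2) \longrightarrow H^{64}(\OHP; \Z/2)
\end{equation*}
is identically zero. (The associated odd Wu class $v_3$ then vanishes automatically: by the Adem relation $Sq^3 = Sq^1 Sq^2$ this operation factors through $Sq^1 \colon H^{63} \to H^{64}$, which is cup product with $v_1 = w_1 = 0$.)

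First I would read off from the additive basis in Section \ref{additiveOHP} a complete list of generators of $H^{60}(\OHP; \Z/2)$, each expressed as a homogeneous monomial in the ring generators $y_2, y_3, y_{12}, y_{16}, y_{20}$. For each such monomial I would expand $Sq^4$ by the Cartan formula using the Mathematica code, producing a linear combination of products of terms $Sq^i y_j$ with $0 \leq i \leq 4$. I would then substitute the values recorded in Theorem \ref{Nakagawa} — together with the values of $Sq^3$ on the generators obtained via the Adem relation $Sq^3 = Sq^1 Sq^2$ in Section \ref{smallsquares}, and the vanishing of $Sq^4 y_2$ and $Sq^4 y_3$ for degree reasons — to rewrite everything as a sum of degree-64 monomials. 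Finally I would reduce this expression modulo the relations of Section \ref{degree64}, where $112$ of the $123$ monomials vanish and the remaining eleven equal the fundamental class, and check that the total coefficient of the fundamental class is $0$ for every generator.

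The main obstacle is the indeterminacy in Theorem \ref{Nakagawa}: the expansions will carry the undetermined coefficients $\alpha'$, $\mu$, $\nu$, $\mu'$, $\nu'$ and so on, so a naive reduction does not obviously give zero. The key step is therefore to verify that every monomial attached to an unknown coefficient already vanishes in $H^{64}$, exactly as the terms $\mu\, y_2^9 y_3^2 y_{20}^2$ and $\nu\, y_2^3 y_3^2 y_{12} y_{20}^2$ dropped out in the 2-orientability computation; this is what will make 3-orientability hold unconditionally, in contrast to the 4-orientability question, which depends through $Sq^8$ on the surviving combination $\beta'' + \nu''$. The remaining difficulties are purely computational — the sheer size of the Cartan expansion on degree-60 monomials and the bookkeeping of the large degree-64 ideal — and are precisely what the Mathematica and Magma computations are designed to absorb.
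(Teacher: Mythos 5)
Your proposal is correct and follows essentially the same route as the paper: reduce 3-orientability via Lemma \ref{orientableWu} to the vanishing of $Sq^4\colon H^{60}(\OHP;\Z/2)\to H^{64}(\OHP;\Z/2)$ (the lower squares and the odd Wu class $v_3$ being handled exactly as you describe), then expand the Cartan formula on the degree-60 generator --- which is in fact the single class $y_{20}^3$ --- substitute the values from Theorem \ref{Nakagawa}, and check that every monomial carrying an undetermined coefficient dies in degree 64 while the remaining terms cancel. That last observation is precisely the point the paper's computation turns on, so the only thing missing is the explicit eight-term expansion itself.
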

\begin{proof}
We see from the table in section \ref{additiveOHP} that $H^{60}(\OHP; \Z/2) \cong \Z/2$ generated by $y_{20}^3$. The Cartan formula for applying $Sq^4$ on a product of three generators has 15 terms. We apply the Cartan formula in Mathematica and reduce mod 2, and subsequently substitute the squaring operations from Theorem \ref{Nakagawa}.
	\begin{eqnarray}
Sq^4 (y_{20}^3) &=& \sum_{i+j+k = 4} Sq^i y_{20} \,Sq^j y_{20}\, Sq^k y_{20} \nonumber \\
	&=& (Sq^1 y_{20})^2 Sq^2 y_{20} + y_{20} (Sq^2 y_{20})^2 + y_{20}^2 (Sq^4 y_{20}) \nonumber \\
	&=& y_{20}\left(y_2^{11} + y_2 y_{20} + \mu y_2^8 y_3^2 + \nu y_2^2 y_3^2 y_{12} \right)^2 \nonumber \\
&&	\hspace{30pt} + y_{20}^2
	\left( y_{12}^2 + y_2^6 y_{12} + \mu' y_2^9 y_3^2+ \nu' y_2^3 y_3^2 y_{12} \right) \nonumber\\
&=&	
	y_2^{22} y_{20} 
	+ \mu^2 y_2^{16} y_3^4 y_{20} 
	+\nu^2 y_2^4 y_3^4 y_{12}^2 y_{20}
	+ \mu' y_2^9 y_3^2 y_{20}^2 \nonumber\\
&&  + y_2^6  y_{12} y_{20}^2
	+ \nu' y_2^3 y_3^2 y_{12}y_{20}^2
	+ y_{12}^2 y_{20}^2 + y_2^2 y_{20}^3 \nonumber\\
&=& 0,
\end{eqnarray}
where the last equality follows from the fact that all monomials equal zero except for the last two which cancel one other, as we can read off from the table in section \ref{degree64}.

Hence the action of $Sq^4$ up to the top dimension is zero, so $w_4 =0$. Then $w_5 = w_6 = w_7=0$ as well.
\end{proof}

\subsection{Potential 4-orientability}
The action of $Sq^3$, $Sq^5$, $Sq^6$ and $Sq^7$ on all generators of the cohomology of $\OHP$ is established in section \ref{smallsquares} and will be used in this section.

\begin{thm}
	$\OHP$ is 4-orientable if $\beta''+ \nu''=1$. Here coefficient $\beta''$ is the prefactor of $y_2 y_3^2 y_{12}$ in $Sq^8 y_{12}$ and $\nu''$ is the prefactor of $y_2 y_3^2 y_{20}$ in $Sq^8 y_{20}$. 
	
\end{thm}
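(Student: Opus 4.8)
The plan is to establish 4-orientability by computing the action of $Sq^8$ reaching the top dimension, exactly mirroring the strategy used for $Sq^2$ and $Sq^4$ in the preceding theorems. By Lemma \ref{orientableWu}, the manifold $\OHP$ is 4-orientable precisely when $v_i=0$ for $0<i\leq 8$, and since 3-orientability is already established, the only remaining obstruction is the vanishing of $v_8$. Equivalently, I must verify that the map $Sq^8\colon H^{56}(\OHP;\Z/2)\to H^{64}(\OHP;\Z/2)$ is identically zero. So the first step is to read off from the additive basis in section \ref{additiveOHP} the generators of $H^{56}(\OHP;\Z/2)$, each written as a monomial in the ring generators $y_2,y_3,y_{12},y_{16},y_{20}$.

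Next, for each such degree-$56$ generator I would expand $Sq^8$ via the iterated Cartan formula using the author's Mathematica code, producing a linear combination of products of the form $Sq^{p_1}y_{i_1}\cdots Sq^{p_k}y_{i_k}$ with $\sum p_j=8$. Into this expansion I substitute the values of $Sq^1,Sq^2,Sq^4,Sq^8$ from Theorem \ref{Nakagawa} together with the derived values of $Sq^3,Sq^5,Sq^6,Sq^7$ from section \ref{smallsquares}; the composite squares arise here precisely because the Cartan expansion forces intermediate squares that are not powers of two, resolved through the Adem relations quoted in the excerpt. After collecting terms mod $2$, I reduce the resulting degree-$64$ expression using the relations listed in section \ref{degree64}, where each of the $123$ monomials either vanishes or equals the fundamental class. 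The outcome will be a coefficient in $\Z/2$ expressed as a polynomial in the undetermined coefficients $\alpha',\alpha'',\beta'',\ldots,\nu''$.

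The crux is that, after this reduction, the total $Sq^8$ to the top dimension on the relevant generator collapses to the single linear expression $\beta''+\nu''$ (plus terms that cancel identically). The mechanism is that most of the indeterminate coefficients enter only through monomials that already vanish in $H^{64}$, or enter an even number of times and so cancel; only $\beta''$, the prefactor of $y_2y_3^2y_{12}$ in $Sq^8y_{12}$, and $\nu''$, the prefactor of $y_2y_3^2y_{20}$ in $Sq^8y_{20}$, survive the quotient by the top-degree relations and contribute a nonzero monomial equal to the fundamental class. Hence $Sq^8$ vanishes to the top dimension if and only if $\beta''+\nu''\equiv 0$, so requiring $\beta''+\nu''=1$ forces $v_8=0$ and yields 4-orientability.

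The main obstacle I anticipate is bookkeeping rather than conceptual: the Cartan expansion of $Sq^8$ on a high-degree monomial generates on the order of tens of thousands of terms, and tracking which of the indeterminate coefficients $\alpha'',\gamma'',\delta'',\mu,\mu',\ldots$ ultimately survive the reduction modulo the degree-$64$ relations requires careful and complete use of the computer algebra output. The delicate point is to confirm that \emph{every} indeterminate coefficient other than $\beta''$ and $\nu''$ either multiplies a vanishing top-degree monomial or appears an even number of times; if even one additional surviving coefficient were overlooked, the stated criterion would be incomplete. I would therefore treat the verification that the surviving expression is exactly $\beta''+\nu''$ as the load-bearing step, cross-checking it against the cancellation pattern already observed in the $Sq^2$ and $Sq^4$ computations.
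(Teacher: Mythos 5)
Your overall strategy is exactly the paper's: reduce 4-orientability to the vanishing of $Sq^8\colon H^{56}(\OHP;\Z/2)\to H^{64}(\OHP;\Z/2)$ (the intermediate Wu classes $v_5,v_6,v_7$ being handled by the Adem relations as in section \ref{section:methods}), expand the Cartan formula on the two generators of $H^{56}$ by machine, substitute the squares from Theorem \ref{Nakagawa} and section \ref{smallsquares}, and reduce by the degree-$64$ relations of Table \ref{64monomials}. The paper does precisely this, on the generators $y_2^{12}y_{12}y_{20}$ and $y_2^{9}y_3^2y_{12}y_{20}$.

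However, there is a genuine gap, and it sits exactly at the step you yourself flag as load-bearing. You do not carry out the computation; you only assert its outcome, and the outcome you assert is wrong and internally inconsistent. You claim the surviving coefficient is ``exactly $\beta''+\nu''$,'' conclude that $Sq^8$ vanishes if and only if $\beta''+\nu''\equiv 0$, and then in the same sentence say that requiring $\beta''+\nu''=1$ forces $v_8=0$ --- these two statements contradict each other. The actual computation gives
\begin{equation*}
Sq^8\bigl(y_2^{12}y_{12}y_{20}\bigr)=(1+\beta''+\nu'')\cdot\bigl[\text{fundamental class}\bigr],\qquad Sq^8\bigl(y_2^{9}y_3^2y_{12}y_{20}\bigr)=0,
\end{equation*}
so the determinate part of the expansion contributes a constant term $1$ that your description of the cancellation pattern overlooks: it is not true that everything outside the indeterminate coefficients ``cancels identically.'' The correct criterion, $\beta''+\nu''=1$, comes from killing $1+\beta''+\nu''$. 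Since the entire content of the theorem is this coefficient, a proof must actually exhibit (or at least correctly report) the result of the $203490$-term and $125970$-term expansions; as written, your argument would, if taken at face value, prove the negation of the stated condition.
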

\begin{proof}
	$H^{56}(\OHP; \Z/2)$ has two generators. Generator $y_2^{12} y_{12} y_{20}$ has a unique monomial representing it. The other generator has five representing monomials, one of which reads $y_2^{9} y_3^2 y_{12} y_{20}$. The Cartan formula for $Sq^8 (y_2^{12} y_{12} y_{20})$ has 203490 terms and for $Sq^8 (y_2^{9} y_3^2 y_{12} y_{20})$ it has 125970 terms. Expanding the Cartan formula and substituting in lower squares in Mathematica yields:
	\begin{eqnarray*}
		Sq^8 (y_2^{12} y_{12} y_{20}) &=& (1+ \beta'' + \nu'') y_2 y_{20}^3\\
		Sq^8 (y_2^{9} y_3^2 y_{12} y_{20}) &=& 0.
	\end{eqnarray*}
	Hence $Sq^8$ vanishes on both generators of $H^{56}(\OHP; \Z/2)$ precisely if $\beta'' + \nu''=1$, in which case $\OHP$ is 4-orientable.
\end{proof}

\subsection{Additive basis for the mod 2 cohomology of $\OHP$}\label{additiveOHP}

\vs

\begin{ttable} 
In the table below, $b_i$ is the dimension of $H^i (\OHP; \Z/2)$.
We give a possible set of generators for the cohomology in each degree.
	
\begin{tabular}{|l|c|l|}
	\hline
	$i$ & \,  $b_i$ \,   & generators \\
	\hline
	0 & 1  & 1 \\
	1 & 0  &   \\
	2 & 1  & $y_2$ \\
	3 & 1  & $y_3 $ \\
	4 & 1  & $y_2^2$   \\	
	5 & 1  & $y_2 \, y_3$   \\
	6 & 2  & $y_2^3$, $y_3^2$    \\
	7 & 1  & $y_2^2\, y_3$   \\	
	8 & 2 &  $y_2^4$, $y_2 \, y_3^2$ \\
	9 & 1  &  $y_2^3\, y_3 $\\
	10 & 2  &  $y_2^5$, $y_2^2\, y_3^2$\\
	11 & 1  & $y_2^4\, y_3$ \\
	12 & 3 & $y_{12}$, $y_2^6$, $y_2^3 \, y_3^2$\\
	13 & 1 & $y_2^5\, y_3 $\\
	14 & 3 & $y_2^4 \, y_3^2$, $y_2\, y_{12}$, $y_2^7$\\
	15 & 2 & $y_2^6\, y_3$, $y_3\, y_{12}$\\
	16 & 4 & $y_2^2\, y_{12}$, $y_2^8$, $y_2^5\, y_3^2$, $y_{16}$\\
	\hline
\end{tabular}

\vs

\begin{tabular}{|l|c|l|}
	\hline
	$*$ & \,  $b_i$ \,    & generators \\
	\hline
	17 & 2  &  $y_2^7\, y_3$, $y_2 \, y_3 \, y_{12}$   \\
	18 & 4 & $y_3^2 \, y_{12}$, $y_2^9$, $y_3^2 \, y_{12} + y_2\, y_{16}$, $y_2^3\, y_{12}$\\
	19 & 2 & $y_2^2\, y_3\, y_{12}$, $y_2^8\, y_3$ \\
	20 & 5 &  $y_2^4\, y_{12}$, $y_2^2\, y_{16}$, $y_2\, y_3^2\, y_{12}$, $y_2^{10}$, $y_{20}$\\
	21 & 2 &  $y_2^3\, y_3\, y_{12}$, $y_2^9\, y_3$\\
	22 & 5 &  $y_2^3\, y_{16}$, $y_2^5\, y_{12}$, $y_2^2\, y_3^2\, y_{12}$, $y_2^{11}$, $y_2\, y_{20}$\\
	23 & 3 &  $y_2^4\, y_3\, y_{12}$, $y_2^{10}\, y_3$, $y_3\, y_{20}$\\
	24 & 6 &  $y_{12}^2$, $y_2^3\, y_3^2\, y_{12}$, $y_2^{12}$, $y_2^4\, y_{16}$, $y_2^6\, y_{12}$, $y_2^2\, y_{20}$\\
	25 & 3 &   $y_2\, y_3\, y_{20}$, $y_2^5\, y_3\, y_{12}$, $y_2^{11}\, y_3$ \\
	26 & 6 &  $y_3^2\, y_{20}$, $y_2\, y_{12}^2 + y_2^5\, y_{16} + y_3^2\, y_{20}$, $y_2\, y_{12}^2$, $y_2^7\, y_{12}$, $y_2^{13}$, $y_2^3\, y_{20}$\\
	27 & 3 & $y_2^2\,y_3\,y_{20}$, $y_2^6\,y_3\,y_{12}$, $y_2^{12}\,y_3$    \\
	28 & 6 &  $y_2^8\,y_{12}$, $y_{12}\,y_{16}$, $y_2^4\,y_{20}$, $y_2^2\,y_{12}^2$, $y_2^2\,y_{12}^2 + y_2^6\,y_{16} + y_2\,y_3^2\,y_{20}$, $y_2^6\,y_{16}$ \\
	29 & 3 &  $y_2^{13}\,y_3$, $y_2^3\,y_3\,y_{20}$, $y_2^7\,y_3\,y_{12}$ \\
	30 & 5 & $y_2^7\,y_{16}$, $y_2\,y_{12}\,y_{16} + y_2^2\,y_3^2\,y_{20}$, $y_2^9\,y_{12}$, $y_2\,y_{12}\,y_{16}$, $y_2^5\,y_{20}$\\
	31 & 2 & $y_2^8\,y_3\,y_{12}$, $y_2^4\,y_3\,y_{20}$    \\
	32 & 7  & $y_2^8\,y_{16}$, $y_{16}^2$, $y_2^8\,y_{16} + y_2^2\,y_{12}\,y_{16}$, $y_{12}\,y_{20}$, $y_2^2\,y_{12}\,y_{16} + y_2^3\,y_3^2\,y_{20}$, \\
 & &	$y_2^{10}\,y_{12}$, $y_2^6\,y_{20}$ \\   
	\hline
\end{tabular}

\vs

\begin{tabular}{|l|c|l|}
	\hline
	$*$ & \,  $b_i$ \,    & generators \\
	\hline
	33 & 2  & $y_2^9\,y_3\,y_{12}$, $y_2^5\,y_3\,y_{20}$  \\
	34 & 5  & $y_2^{11}\,y_{12}$, $y_2\,y_{12}\,y_{20}$, $y_2^3\,y_{12}\,y_{16} + y_2^4\,y_3^2\,y_{20}$, $y_2^3\,y_{12}\,y_{16}$, $y_2^7\,y_{20}$ \\
	35 & 3 & $y_2^{10}\,y_3\,y_{12}$, $y_3\,y_{12}\,y_{20}$, $y_2^6\,y_3\,y_{20}$\\
	36 & 6 &   $y_2^5\,y_3^2\,y_{20}$, $y_2^4\,y_{12}\,y_{16} + y_2^5\,y_3^2\,y_{20}$, $y_2^{12}\,y_{12}$, $y_2^8\,y_{20}$, $y_{16}\,y_{20}$, $y_2^2\,y_{12}\,y_{20}$ \\
	37 & 3 &    $y_2^{11}\,y_3\,y_{12}$, $y_2\,y_3\,y_{12}\,y_{20}$, $y_2^7\,y_3\,y_{20}$ \\
	38 & 6 &  $y_3^2\,y_{12}\,y_{20} + y_2\,y_{16}\,y_{20}$, $y_2^5\,y_{12}\,y_{16}$, $y_2^{13}\,y_{12}$, $y_2^3\,y_{12}\,y_{20}$, $y_2\,y_{16}\,y_{20}$, $y_2^9\,y_{20}$\\
	39 & 3 &    $y_2^2\,y_3\,y_{12}\,y_{20}$, $y_2^8\,y_3\,y_{20}$, $y_2^{12}\,y_3\,y_{12}$ \\ 
	40 & 6 &   $y_2\,y_3^2\,y_{12}\,y_{20}$, $y_2^{10}\,y_{20}$, $y_2^2\,y_{16}\,y_{20}$, $y_{12}^2\,y_{16}$, $y_{12}^2\,y_{16} + y_{20}^2$, $y_2^4\,y_{12}\,y_{20}$\\
	41 & 3 &  $y_2^9\,y_3\,y_{20}$, $y_2^{13}\,y_3\,y_{12}$, $y_2^3\,y_3\,y_{12}\,y_{20}$\\
	42 & 5 & $y_2^2\,y_3^2\,y_{12}\,y_{20} + y_2^3\,y_{16}\,y_{20} + y_2\,y_{20}^2$, $y_2^5\,y_{12}\,y_{20}$, \\
	& & $y_2^2\,y_3^2\,y_{12}\,y_{20} + y_2^3\,y_{16}\,y_{20}$, $       y_2^2\,y_3^2\,y_{12}\,y_{20}$, $y_2^{11}\,y_{20}  $
	\\
	43 & 2 & $y_2^4\,y_3\,y_{12}\,y_{20}$, $y_2^{10}\,y_3\,y_{20} $
	\\
	44 & 5 & $y_2^6\,y_{12}\,y_{20}$, $y_2^4\,y_{16}\,y_{20}$, $y_{12}^2\,y_{20} + y_2^2\,y_{20}^2, 
	y_2^{12}\,y_{20}$, $y_{12}^2\,y_{20} + y_2^4\,y_{16}\,y_{20} $
	\\
	45 & 2 &  $y_2^5\,y_3\,y_{12}\,y_{20}$, $y_2^{11}\,y_3\,y_{20}$ 
	\\
	46 & 4 &   $y_2\,y_{12}^2\,y_{20}$, $y_2\,y_{12}^2\,y_{20} + y_2^5\,y_{16}\,y_{20}$, $y_2^7\,y_{12}\,y_{20}$, $y_2^{13}\,y_{20}$
	\\
	47 & 2 & $y_2^6\,y_3\,y_{12}\,y_{20}$, $y_2^{12}\,y_3\,y_{20}   $
	\\
	48 & 4 & $       y_{12}\,y_{16}\,y_{20}$, $y_2^6\,y_{16}\,y_{20}$, $y_2^2\,y_{12}^2\,y_{20} + y_{12}\,y_{16}\,y_{20}$, $y_2^8\,y_{12}\,y_{20}$
	\\ 
		\hline
\end{tabular}

\vs

\begin{tabular}{|l|c|l|}
	\hline
	$*$ & \,  $b_i$ \,    & generators \\
	\hline
	49 & 2  & $y_2^7\,y_3\,y_{12}\,y_{20}$, $y_2^{13}\,y_3\,y_{20}$ \\
	50 & 3  &   $y_2\,y_{12}\,y_{16}\,y_{20}$, $y_2^7\,y_{16}\,y_{20} + y_2\,y_{12}\,y_{16}\,y_{20}$, $y_2^9\,y_{12}\,y_{20}   $\\	
	51 & 1  &   $ y_2^8\,y_3\,y_{12}\,y_{20}$\\	
	52 & 3  &   $ y_2^2\,y_{12}\,y_{16}\,y_{20} + y_{12}\,y_{20}^2$, $y_2^2\,y_{12}\,y_{16}\,y_{20}$, $y_2^{10}\,y_{12}\,y_{20}  $\\
	53 & 1  &  $y_2^9\,y_3\,y_{12}\,y_{20}$ \\	
	54 & 2  &  $y_2^{11}\,y_{12}\,y_{20}$, $y_2^3\,y_{12}\,y_{16}\,y_{20} $ \\	
	55 & 1  &  $y_2^{10}\,y_3\,y_{12}\,y_{20}$ \\
	56 & 2  &  $y_2^{12}\,y_{12}\,y_{20}$, $y_2^4\,y_{12}\,y_{16}\,y_{20}  $ \\	
	57 & 1  &   $y_2^{11}\,y_3\,y_{12}\,y_{20}$ \\	
	58 & 2  &   $y_2^{13}\,y_{12}\,y_{20}$, $y_2^5\,y_{12}\,y_{16}\,y_{20}  $\\
	59 & 1  &   $y_2^{12}\,y_3\,y_{12}\,y_{20} $\\	
	60 & 1  &   $y_{20}^3 $\\	
	61 & 1  &   $ y_2^{13}\,y_3\,y_{12}\,y_{20}$\\
	62 & 1  &   $y_2\,y_{20}^3$\\
	63 & 0  &   \\
	64 & 1  &   $y_2^2\,y_{20}^3$\\
	\hline
\end{tabular}

\end{ttable}

\subsection{List of non-zero monomials in degree 64}\label{degree64}

\begin{ttable}\label{64monomials}
Below we list the non-zero monomials of degree 64 in the ring generators $y_2$, $y_3$, $y_{12}$, $y_{16}$ and $y_{20}$ of $H^* (\OHP; \Z/2)$. There are 123 64-dimensional monomials in total, out of which 11 equal the generator of $H^{64} (\OHP; \Z/2)$ and the others vanish.

\begin{tabular}{|l c l|}
	\hline
Degree 64 &    &  \\
	\hline
$y_2^{16} y_{12} y_{20} $ &=& $  y_2^2 y_{20}^3           $ \\ $
y_2^{14} y_{12}^3 $ &=& $  y_2^2 y_{20}^3         $ \\ $
y_2^{14} y_{16} y_{20} $ &=& $  y_2^2 y_{20}^3        $ \\ $
y_2^{13} y_3^2 y_{12} y_{20} $ &=& $  y_2^2 y_{20}^3      $ \\ $
y_2^{10} y_{12}^2 y_{20} $ &=& $  y_2^2 y_{20}^3       $ \\ $
y_2^8 y_{12} y_{16} y_{20} $ &=& $  y_2^2 y_{20}^3            $ \\ $
y_2^2 y_{20}^3 $ &=& $  y_2^2 y_{20}^3     $ \\ $     
y_{12}^4 y_{16} $ &=& $  y_2^2 y_{20}^3  $ \\ $         
y_{12}^2 y_{20}^2 $ &=& $  y_2^2 y_{20}^3     $ \\ $    
y_{12} y_{16}^2 y_{20} $ &=& $  y_2^2 y_{20}^3     $ \\ $            
y_{16}^4 $ &=& $  y_2^2 y_{20}^3$\\
\hline
\end{tabular}

\end{ttable}

\subsection{Determining smaller squares}\label{smallsquares}

In this section we calculate the remaining actions of the Steenrod operations up to $Sq^8$ on the generators, namely the action of $Sq^3$, $Sq^5$, $Sq^6$ and $Sq^7$. Mathematica was used in some places for the expansion of the Cartan formula on larger products and to automatically replace the previously established smaller squaring operations.

\subsubsection{Computing the action of $Sq^3$ on the generators}\label{square3}

\begin{eqnarray*}
	Sq^3 y_3 &=& y_3^2\\
	&&\\
	Sq^3 y_{12} &=& Sq^1 Sq^2 y_{12}\\
	&=& Sq^1 (y_2^7 + y_2 y_{12} + y_2^4 y_3^2)\\
	&=& Sq^1 (y_2^7) +  Sq^1 (y_2 y_{12}) +Sq^1 ( y_2^4 y_3^2)\\
	&=& y_2^6 y_3 + y_3 y_{12} + 0 + Sq^1(y_2^4) y_3^2 + y_2^4 Sq^1(y_3^2)\\
	&=& y_2^6 y_3  + y_3 y_{12} \\
	&&\\
	Sq^3 y_{16} &=& Sq^1 Sq^2 y_{16}\\
	&=& 0\\
	&&\\
	Sq^3 y_{20} &=& Sq^1 Sq^2 y_{20}\\
	&=& Sq^1 (y_2^{11}+ y_2 y_{20} + \mu y_2^8 y_3^2 + \nu y_2^2 y_3^2 y_{12})\\
	&=& Sq^1 (y_2) y_2^{10}+ Sq^1(y_2) y_{20} + y_2 Sq^1(y_{20}) + \mu Sq^1(y_2^8 y_3^2) + \nu Sq^1(y_2^2 y_3^2 y_{12})\\
	&=&  y_2^{10} y_3 + y_3 y_{20}  + \mu Sq^1(y_2^8) y_3^2 + \nu Sq^1(y_2^2) y_3^2 y_{12}\\
	&=&  y_2^{10} y_3 + y_3 y_{20}  
\end{eqnarray*}

\subsubsection{Computing the action of $Sq^5$ on the generators}

\begin{eqnarray*}
	Sq^5 y_{12} &=& Sq^1 Sq^4 y_{12}\\
	&=& Sq^1 (y_2^8 +y_2^2 y_{12} + \alpha' y_2^5 y_3^2)\\
	&=&\alpha' y_2^4 y_3^3\\
	&&\\
	Sq^5 y_{16} &=& Sq^1 Sq^4 y_{16}\\
	&=& Sq^1 (y_2^7 y_3^2)\\
	&=& y_2^6 y_3^3\\
	&&\\
	Sq^5 y_{20} &=& Sq^1 Sq^4 y_{20}\\
	&=& Sq^1 (y_{12}^2 + y_2^6 y_{12} + \mu' y_2^9 y_3^2 + \nu' y_2^3y_3^2 y_{12})\\
	&=& Sq^1 (y_{12}^2) + Sq^1 (y_2^6 y_{12}) + \mu' Sq^1 (y_2^9 y_3^2) + \nu' Sq^1 ( y_2^3y_3^2 y_{12})\\
	&=&  Sq^1 (y_2^6) y_{12} + y_2^6 Sq^1 (y_{12})  + \mu' Sq^1 (y_2^9) y_3^2 + \mu' y_2^9 Sq^1 ( y_3^2) \\
	&& + \nu' Sq^1 ( y_2^3) y_3^2 y_{12} + \nu'  y_2^3 Sq^1 (y_3^2) y_{12} + \nu'  y_2^3 y_3^2 Sq^1 ( y_{12}) \\
	&=& 0 + 0 + \mu' Sq^1 (y_2^9) y_3^2 + 0 + \nu' Sq^1 ( y_2^3) y_3^2 y_{12}+0+0\\
	&=& \mu' y_2^8 y_3^3 + \nu' y_2^2 y_3^3 y_{12}
\end{eqnarray*}

\subsubsection{Computing the action of $Sq^6$ on the generators}

\begin{eqnarray*}
	Sq^6 y_{12} &=& Sq^5 Sq^1 y_{12} + Sq^2 Sq^4 y_{12}\\
	&=&0 + Sq^2 (y_2^8 +y_2^2 y_{12} + \alpha' y_2^5 y_3^2)\\
	&=& (Sq^1 (y_2^4))^2 + Sq^2 (y_2^2 y_{12}) + \alpha' Sq^2 (y_2^5 y_3^2)\\
	&=& 0 + y_2^9 + y_2^6 y_3^2 + y_2^3 y_{12} + y_3^2 y_{12} + \alpha' y_2^6 y_3^2  \hspace{40pt}  \text{(Using Mathematica)}\\
	&=& y_2^9 + (1+ \alpha' ) y_2^6 y_3^2 + y_2^3 y_{12} + y_3^2 y_{12} \\
	&&\\
	Sq^6 y_{16} &=& Sq^5 Sq^1 y_{16} + Sq^2 Sq^4 y_{16}\\
	&=&0 + Sq^2 (y_2^7 y_3^2)\\
	&=& Sq^2(y_2^7) y_3^2 + Sq^1(y_2^7) Sq^1(y_3^2) + y_2^7 Sq^2(y_3^2)\\
	&=&  (y_2^8 +y_2^6 y_3) y_3^2 + 0 + 0\\
	&=&  y_2^8 y_3^2 + y_2^6 y_3^3\\
	&&\\
	Sq^6 y_{20} &=& Sq^5 Sq^1 y_{20} + Sq^2 Sq^4 y_{20}\\
	&=&0 + Sq^2 (y_{12}^2 + y_2^6 y_{12} + \mu' y_2^9 y_3^2 + \nu' y_2^3y_3^2 y_{12})\\
	&=& (Sq^1 y_{12})^2 + Sq^2(y_2^6 y_{12}) + \mu' Sq^2(y_2^9 y_3^2) + \nu' Sq^2(y_2^3 y_3^2 y_{12})\\
	&=& 0 + y_2^{13} + y_2^{10}y_3^2 + y_2 ^7 y_{12} + y_2^4 y_3^2 y_{12} + \mu'(y_2^{10}y_3^2) \\
	&& + \nu'(y_2^{10}y_3^2 + y_2^7 y_3^4 + y_2 y_3^4 y_{12})
	\hspace{80pt}  \text{(Using Mathematica)}\\
	&=& y_2^{13} + (1+\mu' + \nu')  y_2^{10}y_3^2 + y_2 ^7 y_{12} + y_2^4 y_3^2 y_{12} + 
	\nu'(y_2^7 y_3^4 + y_2 y_3^4 y_{12})
\end{eqnarray*}

\subsubsection{Computing the action of $Sq^7$ on the generators}\label{square7}

\begin{eqnarray*}
	Sq^7 y_{12} &=& Sq^1 Sq^6 (y_{12})\\
	&=& Sq^1 (y_2^9 + (1+ \alpha' ) y_2^6 y_3^2 + y_2^3 y_{12} + y_3^2 y_{12})\\
	&=& Sq^1 (y_2^9) + (1+ \alpha' ) Sq^1 (y_2^6 y_3^2) + Sq^1 (y_2^3 y_{12}) + Sq^1 (y_3^2 y_{12})\\
	&=& y_2^8 y_3 + 0 + y_2^2 y_3 y_{12} + 0\\
	&=&  y_2^8 y_3  + y_2^2 y_3 y_{12} \\
	&&\\
	Sq^7 y_{16} &=& Sq^1 Sq^6 (y_{16})\\
	&=& Sq^1 (y_2^8 y_3^2 + y_2^6 y_3^3)\\
	&=& Sq^1 (y_2^8 y_3^2) + Sq^1 (y_2^6 y_3^3)\\
	&=& 0\\
	&&\\
	Sq^7 y_{20} &=& Sq^1 Sq^6 (y_{20})\\
	&=& Sq^1 (y_2^{13} + (1+\mu' + \nu')  y_2^{10}y_3^2 + y_2 ^7 y_{12} + y_2^4 y_3^2 y_{12} + 
	\nu' y_2^7 y_3^4 + \nu' y_2 y_3^4 y_{12}) \\
	&=& y_2^{12}y_3 + 0 + y_2 ^6 y_3 y_{12} + 0 +  \nu' y_2^6 y_3^5 + \nu' y_3^5 y_{12} \\
	&=& y_2^{12}y_3 +  y_2 ^6 y_3 y_{12} +  \nu' y_2^6 y_3^5 + \nu' y_3^5 y_{12}
\end{eqnarray*}

\bibliographystyle{alpha}
\bibliography{library}

\end{document}